\newtheorem{theorem}{Theorem}
\newtheorem{proposition}[theorem]{Proposition}
\newtheorem{corollary}[theorem]{Corollary}
\theoremstyle{remark}
\newtheorem{remark}{Remark}
\newtheorem{example}{Example}
\DeclareMathAlphabet\mathoo{U}{eur}{b}{n}
\DeclareMathOperator{\co}{co}
\DeclareMathOperator{\Real}{Re}
\newcommand{\N}{N}
\begin{document}
\setcounter{page}{1}

\title[An estimate of approximation]{An estimate of approximation \\of an analytic function of a matrix\\ by a rational function}

\author{M.~Ferus}
 \address{J.~Heyrovsk{\'{y}} Institute of Physical Chemistry, Academy of Sciences of the Czech Republic, Dolej\v{s}kova 3, 18223 Prague 8, Czech Republic}
\email{\textcolor[rgb]{0.00,0.00,0.84}{martin.ferus@jh-inst.cas.cz}}

\author{V.~G. Kurbatov$^*$}
 \address{Department of Mathematical Physics,
Voronezh State University\\ 1, Universi\-tet\-skaya Square, Voronezh 394018, Russia}
\email{\textcolor[rgb]{0.00,0.00,0.84}{kv51@inbox.ru}}
\thanks{$^*$ Corresponding author}

\author{I.~V. Kurbatova}
 \address{Department of Software Development and Information Systems Administration,
Vo\-ro\-nezh State University\\ 1, Universitetskaya Square, Voronezh 394018, Russia}
\email{\textcolor[rgb]{0.00,0.00,0.84}{irakurbatova@gmail.com}}

\subjclass{Primary 97N50; Secondary 65F60, 30E10, 41A20} 

\keywords{rational function of a matrix, matrix function, rational interpolation, error estimate, order reduction, Arnoldi method, Pad\'e approximant}

\date{\today}

\begin{abstract}
Let $A$ be a square complex matrix; $z_1$, \dots, $z_{\N}\in\mathbb C$~be arbitrary {\rm(}possibly repetitive{\rm)} points of interpolation; $f$ be an analytic function defined on a neighborhood of the convex hull of the union of the spectrum $\sigma(A)$ of the matrix $A$ and the points $z_1$, \dots, $z_{\N}$; and the rational function $r=\frac uv$ (with the degree of the numerator $u$ less than $N$) interpolates $f$ at these points {\rm(}counted according to their multiplicities{\rm)}. Under these assumptions estimates of the kind
\begin{equation*}
\bigl\lVert f(A)-r(A)\bigr\rVert\le
\max_{\substack{t\in[0,1]\\\mu\in\co\{z_1,z_{2},\dots,z_{\N}\}}}\biggl\lVert\Omega(A)[v(A)]^{-1}
\frac{\bigl(vf\bigr)^{{(\N)}}
\bigl((1-t)\mu\mathbf1+tA\bigr)}{\N!}\biggr\rVert,
\end{equation*}
where $\Omega(z)=\prod_{k=1}^N(z-z_k)$,
are proposed. As an example illustrating the accuracy of such estimates, an approximation of the impulse response of a dynamic system obtained using the reduced-order  Arnoldi method is considered, the actual accuracy of the approximation is compared with the estimate based on this paper.
\end{abstract}

\maketitle

\section*{Introduction}\label{s:Introduction}
It is well-known~\cite[ch.~IV,~\S~5]{Hartman73:eng}
that the solution of the initial value problem $x'(t)=Ax(t)$, $x(0)=x_0$, where $A$ is a square matrix, can be represented in the form $x(t)=e^{At}x_0$. Here $e^{At}$ is the result of the substitution of the matrix $A$ into the analytic function $\exp_t(\lambda)=e^{\lambda t}$. Some other analytic matrix functions arise in other applications~\cite{Bai-Demmel98,Burrage-Hale-Kay12,Byers-He-Mehrmann,Cardoso-Sadeghi18,
Grimm-Hochbruck08,Higham08,Jarlebring07,Kenney-Laub95,Schmelzer-Trefethen07}.

As a rule, an analytic function of a matrix can be calculated only approximately.
The usual way to approximately calculate an analytic function $f$ of a matrix $A$ is based on replacing $f$ with a polynomial or a rational function. The approximation by a rational function possesses some additional capabilities compared to a poly\-nomial one: it can be more accurate and can approximate an analytic function on an unbounded set. In this paper, we propose estimates (Theorem~\ref{t:Mathias:r4} and its corollaries) of $f(A)-r(A)$, where $r$ is a rational function that interpolates $f$. Similar estimates for polynomial approximation were described in~\cite{Kurbatov-Kurbatova-EMJ20,Mathias93a}.

As an application of these estimates, we consider the estimate of the accuracy of approximation of the impulse response of a dynamical system~\eqref{e:DS} using the Arnoldi reduced-order method (Theorems~\ref{t:est Arnoldi2} and~\ref{t:est Arnoldi1}). The rational (Sections~\ref{s:Arnoldi-2} and~\ref{s:Arnoldi-1}) reduced-order Arnoldi method is equivalent (Propostions~\ref{p:reduced order system} and~\ref{p:Arnoldi-r1}) to the approximation of the analytic function $\exp_t(\lambda)=e^{\lambda t}$ by a rational function $r_t$. However, the function $r_t$ is not calculated explicitly.

In Section~\ref{s:Numerical range} we illustrate (Examples~\ref{ex:1} and~\ref{ex:2}) our estimates of the Arnoldi reduced-order approximation using the properties of the numerical range.
In Section~\ref{s:experiment}, we describe a numerical experiment that shows the difference between our estimate and the actual approximation of the impulse response of a dynamical system obtained by means of the Arnoldi method.

In Sections~\ref{s:poly} and~\ref{s:rat}, we recall some facts connected with polynomial and rational interpolation. Section~\ref{s:Notation} contains general notation. In Section~\ref{s:DS}, we recall the general properties of reduced-order methods.

\section{Notation and other preliminaries}\label{s:Notation}

Let $n,m\in\mathbb N$.
We denote by $\mathbb C^{n\times m}$ the space of all complex $n\times m$-matrices. We denote the identity matrix by the symbol $\mathbf1$ or $\mathbf1_{n\times n}$. The symbol $A^H$ means the conjugate transpose of $A\in\mathbb C^{n\times m}$.
We represent elements $x\in\mathbb C^n$ as columns; thus the products $Ax$ and $y^HA$ make sense for $A\in\mathbb C^{n\times m}$, $x\in\mathbb C^m$, and $y\in\mathbb C^n$.
Usually, we identify a matrix $A\in\mathbb C^{n\times m}$ and the operator $x\mapsto Ax$ from $\mathbb C^m$ to $\mathbb C^n$ induced by $A$. In particular, by the image of a matrix we mean the image of the operator induced by it.

We assume that the domains of analytic functions under consideration are \emph{open} (maybe disconnected) subsets of $\mathbb C$.

Let $A\in\mathbb C^{n\times n}$. The spectrum of $A$ is the set $\sigma(A)=\{\lambda_1,\lambda_2,\dots,\lambda_m\}$ of all its eigenvalues. By the \emph{algebraic multiplicity} of $\lambda_j$ we mean the multiplicity of $\lambda_j$ as the root of the characteristic polynomial.

Let the domain of an analytic function $f$ contain the spectrum of a matrix $A\in\mathbb C^{n\times n}$. The \emph{function $f$ applied to the matrix} $A$ is the matrix
\begin{equation*}
f(A)=\frac1{2\pi i}\int_\Gamma f(\lambda)R_\lambda\,d\lambda,
\end{equation*}
where the contour $\Gamma$ encloses the spectrum of $A$ and
\begin{equation*}
R_\lambda=(\lambda\mathbf1-A)^{-1}
\end{equation*}
is the resolvent of $A$. The function
\begin{equation*}
\exp_t(\lambda)=e^{\lambda t}
\end{equation*}
is the most important example of the function $f$ from the point of view of applications.

We denote by $\co M$ the convex hull of a set $M\subset\mathbb C$.

\section{Polynomial interpolation}\label{s:poly}
Let $z_1,z_2,\dots,z_m\in \mathbb C$ be given distinct points called \emph{points {\rm(or} nodes{\rm)} of interpolation} and $n_1,n_2,\dots, n_m\in\mathbb N$ be their \emph{multiplicities}. We set
\begin{equation*}
N=\sum_{k=1}^mn_k.
\end{equation*}
Let $f$ be a function analytic in a neighborhood of the points of interpolation. The problem of \emph{polynomial interpolation} is~\cite{Gelfond:eng,Jordan,Walsh:eng} is to find a polynomial $p$ of degree $\le N-1$ satisfying the conditions
\begin{equation}\label{e:interpolation problem:poly:fold}
p^{(j)}(z_k)=f^{(j)}(z_k),\qquad k=1,\dots,m;\;j=0,1,\dots,n_k-1.
\end{equation}

\begin{proposition}[{\rm\cite[\S~3.1, Theorem 2]{Walsh:eng}}]\label{p:interpol is unique}
Interpolation problem~\eqref{e:interpolation problem:poly:fold} has a unique solution.
\end{proposition}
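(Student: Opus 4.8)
The plan is to recast the problem as a statement about a linear map between two vector spaces of equal dimension and then to reduce the existence-and-uniqueness claim to injectivity. Let $\mathcal P_{N-1}$ denote the space of polynomials of degree at most $N-1$; as a complex vector space it has dimension $N$. I would consider the linear map $L$ that sends a polynomial $p\in\mathcal P_{N-1}$ to the $N$-tuple of its prescribed data $\bigl(p^{(j)}(z_k)\bigr)$, where $k=1,\dots,m$ and $j=0,\dots,n_k-1$; since $\sum_{k=1}^m n_k=N$, the target space is $\mathbb C^N$, which also has dimension $N$. Solving~\eqref{e:interpolation problem:poly:fold} amounts to finding a preimage under $L$ of the data vector $\bigl(f^{(j)}(z_k)\bigr)$, so it suffices to prove that $L$ is a bijection.

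Because the domain and codomain have the same finite dimension, $L$ is a bijection as soon as it is injective, so I would concentrate on the kernel of $L$. Suppose $q\in\mathcal P_{N-1}$ satisfies $q^{(j)}(z_k)=0$ for all $k=1,\dots,m$ and $j=0,\dots,n_k-1$. Vanishing of $q$ together with its first $n_k-1$ derivatives at $z_k$ says precisely that $z_k$ is a root of $q$ of multiplicity at least $n_k$; since the nodes $z_1,\dots,z_m$ are distinct, the product $\prod_{k=1}^m(z-z_k)^{n_k}$, a polynomial of degree $N$, divides $q$. A nonzero polynomial divisible by a polynomial of degree $N$ must itself have degree at least $N$, which is incompatible with $\deg q\le N-1$; hence $q=0$ and $L$ is injective.

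The key step---and essentially the only substantive point---is the multiplicity count in the injectivity argument: translating the derivative conditions into a statement about root multiplicities and then comparing the forced degree $N$ against the allowed degree $N-1$. Everything else is the formal linear-algebra passage from injectivity to bijectivity, which yields simultaneously the existence of a solution (surjectivity of $L$) and its uniqueness (injectivity of $L$). I do not anticipate a genuine obstacle here; the only care needed is in verifying that the derivative-vanishing conditions are equivalent to the asserted root multiplicity, which follows from Taylor-expanding $q$ about each node $z_k$.
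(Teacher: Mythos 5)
Your argument is correct and complete: the kernel computation (a polynomial of degree at most $N-1$ vanishing to order $n_k$ at each of the distinct nodes $z_k$ is divisible by $\Omega(z)=\prod_{k=1}^m(z-z_k)^{n_k}$, which has degree $N$, so it must be zero) establishes injectivity of the evaluation map $L\colon\mathcal P_{N-1}\to\mathbb C^N$, and equality of dimensions then gives bijectivity, hence existence and uniqueness simultaneously. Note that the paper itself supplies no proof for this proposition; it is quoted from Walsh's book. The route taken there (and reflected in the paper's Theorem~\ref{t:f-p via Gamma}) is constructive: the Hermite interpolating polynomial is exhibited explicitly by the contour-integral formula~\eqref{e:Walsh-Hermite:p}, which settles existence directly, with uniqueness then following from essentially the same root-multiplicity count you use. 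Your dimension-count argument is shorter and purely algebraic, at the price of being non-constructive; the constructive formula is what the paper actually exploits later (it underlies the remainder representation~\eqref{e:Walsh-Hermite:f-p} and hence the main estimates), so both viewpoints earn their keep here.
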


\begin{theorem}[{\rm\cite[p.~5]{Higham08}}]\label{t:f(A)=p(A)}
Let $A\in\mathbb C^{n\times n}$.
Let the spectrum $\sigma(A)$ of $A$ consists of the points $\lambda_1,\lambda_2,\dots,\lambda_m$, and let $w_1,w_2,\dots,w_{m}$ be their algebraic multiplicities.
Let the functions $f$ and $p$ be analytic in a neighborhood of $\sigma(A)$. Let the functions $f$ and $p$ and their derivatives coincide at $\lambda_i$ up to the order $w_i-1$:
\begin{equation*}
p^{(j)}(\lambda_k)=f^{(j)}(\lambda_k),\qquad k=1,2,\dots,m;\;j=0,1,\dots,w_m-1.
\end{equation*}
Then $f(A)=p(A)$.
\end{theorem}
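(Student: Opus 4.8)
The plan is to work directly from the contour-integral definition of $f(A)$ and reduce the claim to a single application of Cauchy's theorem. First I would set $g=f-p$. Then $g$ is analytic in a neighborhood of $\sigma(A)$, and the coincidence hypothesis says exactly that $g^{(j)}(\lambda_k)=0$ for $k=1,\dots,m$ and $j=0,1,\dots,w_k-1$; equivalently, $g$ has a zero of order at least $w_k$ at each eigenvalue $\lambda_k$. Because the contour-integral definition is linear in the function, $f(A)-p(A)=g(A)=\frac1{2\pi i}\int_\Gamma g(\lambda)R_\lambda\,d\lambda$, so it suffices to show that this integral vanishes.

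The central step is to prove that the matrix-valued function $\lambda\mapsto g(\lambda)R_\lambda$ extends analytically across the eigenvalues. By Cramer's rule, $R_\lambda=(\lambda\mathbf1-A)^{-1}=\chi(\lambda)^{-1}\,\mathrm{adj}(\lambda\mathbf1-A)$, where $\chi(\lambda)=\det(\lambda\mathbf1-A)=\prod_{k=1}^m(\lambda-\lambda_k)^{w_k}$ is the characteristic polynomial (the algebraic multiplicity $w_k$ being, by definition, the order of $\lambda_k$ as a root of $\chi$) and the entries of $\mathrm{adj}(\lambda\mathbf1-A)$ are polynomials in $\lambda$. Hence
\[
g(\lambda)R_\lambda=\frac{g(\lambda)}{\chi(\lambda)}\,\mathrm{adj}(\lambda\mathbf1-A).
\]
At each $\lambda_k$ the numerator $g$ vanishes to order at least $w_k$ while $\chi$ vanishes to order exactly $w_k$, so the scalar quotient $g/\chi$ has a removable singularity and is analytic at $\lambda_k$; multiplying by the polynomial adjugate preserves analyticity. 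Therefore $g(\lambda)R_\lambda$ is analytic on a full neighborhood of $\sigma(A)$.

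Finally I would choose the contour $\Gamma$ close enough to $\sigma(A)$ that $\Gamma$ together with the region it bounds lies inside the neighborhood on which $g$, and hence $g(\lambda)R_\lambda$, is analytic. The integrand is then holomorphic on and inside $\Gamma$, so Cauchy's theorem (applied entrywise to the matrix) yields $\int_\Gamma g(\lambda)R_\lambda\,d\lambda=0$, whence $g(A)=0$ and $f(A)=p(A)$.

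I expect the main obstacle to be the removable-singularity bookkeeping at each eigenvalue: one must match the order of vanishing of $g$ coming from the interpolation conditions against the pole order of $R_\lambda$, which is precisely the algebraic multiplicity $w_k$. This matching is exactly why the hypothesis demands coincidence up to order $w_k-1$ and no less; dropping even one order would leave a genuine pole at some $\lambda_k$ with a nonzero residue, and the integral would fail to vanish.
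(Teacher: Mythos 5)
Your proof is correct, but note that the paper itself offers no proof of this theorem: it is quoted from Higham's book, where it is essentially immediate from the Jordan-form definition of $f(A)$ (the matrix $f(A)$ is assembled from the data $f^{(j)}(\lambda_k)$ alone, so two functions with the same data yield the same matrix). Your argument is a genuinely different, self-contained route adapted to the Cauchy-integral definition that this paper actually adopts in Section~\ref{s:Notation}, and it works: with $g=f-p$, the representation $R_\lambda=\chi(\lambda)^{-1}\,\mathrm{adj}(\lambda\mathbf1-A)$ shows that $g(\lambda)R_\lambda$ has only removable singularities at the eigenvalues, since $g$ vanishes at $\lambda_k$ to order at least $w_k$ while $\chi$ vanishes there to order exactly $w_k$, and Cauchy's theorem then annihilates the integral over a contour chosen inside the common domain of analyticity. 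This buys a proof that never mentions Jordan structure and matches the paper's operational definition of $f(A)$. Two small corrections to your closing remarks: the pole order of $R_\lambda$ at $\lambda_k$ is the index of $\lambda_k$ (the size of its largest Jordan block), which can be strictly smaller than the algebraic multiplicity $w_k$; Cramer's rule only supplies $w_k$ as an upper bound, which is all your argument needs. Consequently, the claim that dropping one order of coincidence would necessarily leave a nonzero residue is false in general --- for a diagonalizable $A$ with a repeated eigenvalue only $j=0$ matters --- so the hypothesis with algebraic multiplicities is sufficient but not sharp.
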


\begin{theorem}[{\rm\cite[\S~3.1]{Walsh:eng}}]\label{t:f-p via Gamma}
Let $p$ be the interpolation polynomial satisfying~\eqref{e:interpolation problem:poly:fold} and a contour $\Gamma$ encloses the interpolation points $z_1$, $z_2$, \dots, $z_m$. Then at all points $z$ lying inside the contour $\Gamma$ one has
\begin{align}
p(z)&=\frac1{2\pi
i}\int_{\Gamma}\frac{\Omega(\lambda)-\Omega(z)}{\Omega(\lambda)(\lambda-z)}f(\lambda)\,d\lambda,\label{e:Walsh-Hermite:p} \\
f(z)-p(z)&=\Omega(z)\frac1{2\pi
i}\int_{\Gamma}\frac{f(\lambda)\,d\lambda}{\Omega(\lambda)(\lambda-z)},\label{e:Walsh-Hermite:f-p}
\end{align}
where
\begin{equation*}
\Omega(z)=\prod_{k=1}^m(z-z_k)^{n_k}.
\end{equation*}
\end{theorem}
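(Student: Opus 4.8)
The plan is to establish formula~\eqref{e:Walsh-Hermite:p} first and to read off~\eqref{e:Walsh-Hermite:f-p} as an immediate byproduct. I would write $q(z)$ for the right-hand side of~\eqref{e:Walsh-Hermite:p} and prove that $q$ coincides with the interpolation polynomial $p$; the uniqueness guaranteed by Proposition~\ref{p:interpol is unique} then closes the argument. Throughout I use that $f$ is analytic in a neighborhood of the region bounded by $\Gamma$, so that Cauchy's integral formula is available.

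The first step is to check that $q$ is a polynomial in $z$ of degree at most $N-1$. The key algebraic observation is that, for the polynomial $\Omega$ of degree $N$, the difference quotient $\frac{\Omega(\lambda)-\Omega(z)}{\lambda-z}$ is, for each fixed $\lambda$, a polynomial in $z$ of degree $N-1$: writing $\Omega(\lambda)-\Omega(z)=\sum_k a_k(\lambda^k-z^k)$ and using $\frac{\lambda^k-z^k}{\lambda-z}=\sum_{i=0}^{k-1}\lambda^{k-1-i}z^i$ makes this explicit. Hence the integrand in~\eqref{e:Walsh-Hermite:p} is $\frac{f(\lambda)}{\Omega(\lambda)}$ times a polynomial in $z$ of degree $\le N-1$, and integrating in $\lambda$ (which does not involve $z$) yields a polynomial $q$ in $z$ of degree $\le N-1$.

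Next I would compute $f-q$. By Cauchy's integral formula $f(z)=\frac1{2\pi i}\int_\Gamma\frac{f(\lambda)}{\lambda-z}\,d\lambda$ for $z$ inside $\Gamma$. Subtracting the definition of $q$ and simplifying $1-\frac{\Omega(\lambda)-\Omega(z)}{\Omega(\lambda)}=\frac{\Omega(z)}{\Omega(\lambda)}$ gives $f(z)-q(z)=\Omega(z)\,g(z)$, where $g(z)=\frac1{2\pi i}\int_\Gamma\frac{f(\lambda)}{\Omega(\lambda)(\lambda-z)}\,d\lambda$. This is exactly the right-hand side of~\eqref{e:Walsh-Hermite:f-p}, so once $q=p$ is proved, both identities hold simultaneously.

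To conclude $q=p$, I would verify the interpolation conditions~\eqref{e:interpolation problem:poly:fold} for $q$. The function $g$ is a Cauchy-type integral and is therefore analytic at every point lying inside $\Gamma$, in particular at each node $z_k$. Since $\Omega$ vanishes to order exactly $n_k$ at $z_k$ while $g$ is analytic there, the product $f-q=\Omega g$ vanishes to order at least $n_k$ at $z_k$, i.e. $(f-q)^{(j)}(z_k)=0$ for $j=0,1,\dots,n_k-1$. Thus $q$ is a polynomial of degree $\le N-1$ satisfying all the Hermite conditions, and by uniqueness $q=p$. The expansion of the difference quotient and the differentiation under the integral sign are mechanical; the one point that deserves genuine care is the analyticity of $g$ inside $\Gamma$ combined with the precise order of vanishing of $\Omega$ at each node, since it is this pairing that upgrades the bare factorization $f-q=\Omega g$ into the full set of osculatory interpolation conditions.
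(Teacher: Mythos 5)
Your argument is correct and complete: the difference quotient $\frac{\Omega(\lambda)-\Omega(z)}{\lambda-z}$ makes $q$ a polynomial of degree $\le N-1$, the Cauchy formula gives the factorization $f-q=\Omega g$ with $g$ analytic inside $\Gamma$, and the order-$n_k$ vanishing of $\Omega$ at each node together with Proposition~\ref{p:interpol is unique} forces $q=p$. The paper itself offers no proof here --- it cites Walsh --- and your argument is precisely the classical Hermite-interpolation derivation found in that source, so there is nothing to compare beyond noting that you have correctly supplied the omitted details, including the one genuinely delicate point (analyticity of the Cauchy-type integral $g$ at the nodes, where $1/\Omega(\lambda)$ itself would be singular but the integration variable stays on $\Gamma$).
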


Sometimes it is convenient to specify the multiplicities of points of interpolation implicitly.
Let $z_1, z_2,\dots,z_N\in\mathbb C$ be a list of points of interpolation (some of them may be repeated). We define the multiplicities of the points $z_1$, $z_2$, \dots, $z_N$ as the number of their repetition in this list.

Let a complex-valued function $f$ be defined and analytic in a neighborhood of the points $z_1$, $z_2$, \dots, $z_N$. The \emph{divided differences}
of the function $f$ with respect to the points $z_1$,
$z_2$, \dots, $z_N$ are defined~\cite{de_Boor05,Gelfond:eng,Jordan} by the recurrent relations
 \begin{equation}\label{e:divided differences}
 \begin{split}
f[z_i]&=f(z_i),\qquad\qquad\qquad\qquad\qquad\qquad\qquad 1\le i\le N,\\
f[z_i,z_{i+1}])&=\frac{f[z_{i+1}]-f[z_i]}{z_{i+1}-z_i},\qquad\qquad \qquad\qquad\qquad 1\le i\le N-1,\\
f[z_i,\dots,z_{i+m}]&=\frac{f[z_{i+1},\dots,z_{i+m}]
-f[z_{i},\dots,z_{i+m-1}]} {z_{i+m}-z_i},\quad 1\le i\le N-m.
 \end{split}
 \end{equation}
In these formulae, if the denominator vanishes, then the quotient is understood as the limit as $z_{i+m}-z_i\to0$; the limit always exists and coincides with the derivative with respect to one of the arguments of the previous divided difference.

\begin{proposition}[{\rm\cite[formula (52)]{de_Boor05}
}]\label{p:Gelfond(47)}
Let a function $f$ be analytic in a neighborhood of the convex hull of the points $z_1$, $z_2$, \dots,{ }$z_\N$ {\rm(}not necessarily different{\rm)}. Then
\begin{multline*}
f[z_1,z_{2},\dots,z_{\N}]=\int_0^1\int_0^{t_1}\dots\int_0^{t_{\N-2}}f^{{(\N-1)}}
\bigl(z_1+(z_2-z_1)t_1+\dots\\
\dots+(z_{\N-1}-z_{\N-2})t_{\N-2}+(z_{\N}-z_{\N-1})t_{\N-1}\bigr)\,dt_{\N-1}\dots dt_1.
\end{multline*}
\end{proposition}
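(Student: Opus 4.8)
The plan is to recognize this as the Hermite--Genocchi representation of the divided difference and to prove it by induction on $N$, after first reducing to the case of pairwise distinct nodes. Both sides are continuous (indeed analytic) functions of $(z_1,\dots,z_N)$ on the region where $f$ is defined: for the left-hand side this is the content of the limiting convention in~\eqref{e:divided differences}, and for the right-hand side it follows from continuity of $f^{(N-1)}$ together with dominated convergence on the fixed simplex of integration. Since tuples with distinct entries are dense, it suffices to establish the identity when $z_1,\dots,z_N$ are distinct and then pass to the limit.

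For the induction, the base case $N=1$ is the trivial identity $f[z_1]=f(z_1)$ (no integrations), and the case $N=2$ reduces, via the substitution $u=z_1+(z_2-z_1)t_1$, to $\int_0^1 f'(z_1+(z_2-z_1)t_1)\,dt_1=\frac{f(z_2)-f(z_1)}{z_2-z_1}=f[z_1,z_2]$. For the inductive step I would write the integrand argument as $\xi=z_1+\sum_{k=1}^{N-1}(z_{k+1}-z_k)t_k$ and carry out the innermost integration in $t_{N-1}\in[0,t_{N-2}]$. Because $\partial\xi/\partial t_{N-1}=z_N-z_{N-1}\neq0$ for distinct nodes, the fundamental theorem of calculus gives
\[
\int_0^{t_{N-2}} f^{(N-1)}(\xi)\,dt_{N-1}
=\frac{f^{(N-2)}(\xi)\big|_{t_{N-1}=t_{N-2}}-f^{(N-2)}(\xi)\big|_{t_{N-1}=0}}{z_N-z_{N-1}} .
\]
At $t_{N-1}=0$ the argument is exactly the $(N-1)$-point Hermite--Genocchi argument for the list $z_1,\dots,z_{N-1}$, while at $t_{N-1}=t_{N-2}$ the two terms carrying $t_{N-2}$ combine, since $(z_{N-1}-z_{N-2})+(z_N-z_{N-1})=z_N-z_{N-2}$, into the $(N-1)$-point argument for the list $z_1,\dots,z_{N-2},z_N$.

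Integrating what remains over the reduced simplex $1\ge t_1\ge\dots\ge t_{N-2}\ge0$ and applying the induction hypothesis to each of these two $(N-1)$-point lists then yields
\[
\text{(right-hand side)}=\frac{f[z_1,\dots,z_{N-2},z_N]-f[z_1,\dots,z_{N-1}]}{z_N-z_{N-1}} .
\]
It remains to identify this with $f[z_1,\dots,z_N]$, which follows from the symmetry of divided differences in their arguments together with the recurrence in~\eqref{e:divided differences}: applying the recurrence with $z_{N-1}$ and $z_N$ playing the role of the two extreme nodes gives precisely the displayed quotient. I expect the main obstacle to be the bookkeeping of the iterated integral over the simplex, in particular verifying carefully that the boundary evaluations reproduce the correct reduced node lists so that the induction hypothesis applies verbatim; the passage $z_{N-1}\to z_N$ for coincident nodes is then handled cleanly by the continuity reduction made at the outset.
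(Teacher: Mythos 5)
Your proof is correct, but note that the paper itself offers no argument for this proposition: it is quoted as a known result from de Boor's survey (formula (52) there), where it appears as the classical Hermite--Genocchi representation of the divided difference, so there is no internal proof to compare yours against. What you have written is essentially the standard induction: the innermost integration via the fundamental theorem of calculus, the boundary evaluations producing the two reduced node lists $z_1,\dots,z_{N-1}$ and $z_1,\dots,z_{N-2},z_N$ (the telescoping $(z_{N-1}-z_{N-2})+(z_N-z_{N-1})=z_N-z_{N-2}$ at the upper limit is right), and the recurrence closing the induction. Two points deserve more care than you give them. First, the symmetry of $f[\,\cdot\,,\dots,\,\cdot\,]$ in its arguments, which you need in order to convert $\bigl(f[z_1,\dots,z_{N-2},z_N]-f[z_1,\dots,z_{N-1}]\bigr)/(z_N-z_{N-1})$ into $f[z_1,\dots,z_N]$, is not among the facts recorded in Section~\ref{s:poly}; the cleanest justification available inside the paper is the contour representation of Theorem~\ref{t:f[] via Gamma}, whose right-hand side is manifestly symmetric in the nodes. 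Second, your density argument requires joint continuity of the left-hand side in $(z_1,\dots,z_N)$, whereas the limiting convention in~\eqref{e:divided differences} only addresses one-variable limits along the recurrence; again Theorem~\ref{t:f[] via Gamma} supplies this immediately for analytic $f$, since once $\Gamma$ is fixed to enclose a neighborhood of the convex hull of the nodes the integrand depends continuously (indeed analytically) on them. With those two references made explicit, the argument is complete and matches the standard proof of the cited formula.
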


\begin{theorem}[{\rm\cite[formula (51)]{de_Boor05}, \cite[formula (54)]{Gelfond:eng1}}]\label{t:f[] via Gamma}
Let a contour $\Gamma$ enclose the interpolation points $z_1$, $z_2$, \dots, $z_\N$ {\rm(}counted according to their multiplicities{\rm)} and the function $f$ be analytic in a neighborhood of the domain surrounded by $\Gamma$. Then
\begin{equation*}
f[z_1,z_{2},\dots,z_{\N}]=\frac1{2\pi i}\int_{\Gamma}\frac{f(\lambda)}{\Omega(\lambda)}\,d\lambda,
\end{equation*}
where
\begin{equation*}
\Omega(z)=\prod_{i=1}^\N(z-z_i).
\end{equation*}
\end{theorem}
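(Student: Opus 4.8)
The plan is to induct on the number $\N$ of nodes, using the defining recurrence~\eqref{e:divided differences} together with a single partial-fraction identity. For the base case $\N=1$ the claim is just Cauchy's integral formula, $\frac1{2\pi i}\int_\Gamma\frac{f(\lambda)}{\lambda-z_1}\,d\lambda=f(z_1)=f[z_1]$. For the inductive step I would assume the formula for every list of $\N-1$ nodes lying inside $\Gamma$, and split into two regimes according to whether the list $z_1,\dots,z_\N$ is constant.

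First suppose the list is not constant. Since the right-hand side depends only on $\Omega$ and hence is symmetric in the nodes, and since divided differences are themselves symmetric in their arguments, I may reorder the list so that $z_1\neq z_\N$. Then the last relation in~\eqref{e:divided differences} is a genuine quotient, and applying the inductive hypothesis to the two sublists $z_2,\dots,z_\N$ and $z_1,\dots,z_{\N-1}$ --- both enclosed by the same contour $\Gamma$ --- gives
\[
f[z_1,\dots,z_\N]=\frac1{z_\N-z_1}\cdot\frac1{2\pi i}\int_\Gamma f(\lambda)\left(\frac1{\prod_{i=2}^\N(\lambda-z_i)}-\frac1{\prod_{i=1}^{\N-1}(\lambda-z_i)}\right)d\lambda.
\]
The crux is the elementary identity, obtained by passing to the common denominator $\Omega(\lambda)=\prod_{i=1}^\N(\lambda-z_i)$,
\[
\frac1{\prod_{i=2}^\N(\lambda-z_i)}-\frac1{\prod_{i=1}^{\N-1}(\lambda-z_i)}=\frac{(\lambda-z_1)-(\lambda-z_\N)}{\Omega(\lambda)}=\frac{z_\N-z_1}{\Omega(\lambda)}.
\]
Substituting this, the factor $z_\N-z_1$ cancels and leaves precisely $\frac1{2\pi i}\int_\Gamma\frac{f(\lambda)}{\Omega(\lambda)}\,d\lambda$, which is the assertion.

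The remaining case, a constant list $z_1=\dots=z_\N=z$, is the one I expect to demand the most care, since there the recurrence degenerates and one must invoke the limit convention. Here the divided difference collapses to $f^{(\N-1)}(z)/(\N-1)!$, while $\Omega(\lambda)=(\lambda-z)^\N$ and Cauchy's formula for the $(\N-1)$-th derivative gives $\frac1{2\pi i}\int_\Gamma\frac{f(\lambda)}{(\lambda-z)^\N}\,d\lambda=f^{(\N-1)}(z)/(\N-1)!$, so the two sides again coincide. An alternative to the symmetry reduction above is to keep the nodes in their given order and instead argue by continuity: both sides are continuous functions of the nodes inside $\Gamma$ (the right-hand side because $\Omega$ does not vanish on the fixed contour, the left-hand side by the Hermite--Genocchi representation of Proposition~\ref{p:Gelfond(47)}), so the equality, once known whenever $z_1\neq z_\N$, extends to the coincident configurations by letting $z_\N\to z_1$. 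Either way the induction closes, and the only genuinely delicate points are the justification of these degenerate and limiting cases and the bookkeeping that the single contour $\Gamma$ simultaneously encloses both sublists.
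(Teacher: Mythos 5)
The paper offers no proof of this theorem at all --- it is quoted from de Boor and Gel'fond with only a citation --- so your argument cannot be compared against an internal one; judged on its own it is correct and gives a genuinely self-contained derivation. The base case, the partial-fraction identity $\frac1{\prod_{i=2}^\N(\lambda-z_i)}-\frac1{\prod_{i=1}^{\N-1}(\lambda-z_i)}=\frac{z_\N-z_1}{\Omega(\lambda)}$, and the cancellation of $z_\N-z_1$ against the recurrence~\eqref{e:divided differences} are all right, as is the fully coincident case via Cauchy's formula for derivatives. The one step you should not wave at is the reduction ``reorder so that $z_1\neq z_\N$'': the paper defines divided differences only by the ordered recurrence~\eqref{e:divided differences}, and their symmetry in the arguments is nowhere established there, so invoking it imports an unproved lemma. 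Your alternative route is the safer one and can be made fully precise without symmetry: having proved the identity whenever $z_1\neq z_\N$, handle a degenerate configuration by perturbing the last node to $w\neq z_1$, noting that the resulting quotient equals $\frac1{2\pi i}\int_\Gamma\frac{f(\lambda)}{(\lambda-w)\prod_{i=1}^{\N-1}(\lambda-z_i)}\,d\lambda$, and letting $w\to z_1$; the contour integral converges because $\Omega$ stays bounded away from zero on the fixed compact $\Gamma$, which is exactly the limit that the paper's convention assigns to the left-hand side. If you instead justify continuity of the left-hand side via Proposition~\ref{p:Gelfond(47)}, note the small mismatch of hypotheses: that proposition needs $f$ analytic on a neighborhood of the convex hull of the nodes, which need not lie inside a non-convex $\Gamma$, so the direct limiting argument on the integral is preferable.
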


\begin{proposition}[{\rm\cite[formula (52)]{Gelfond:eng1}}]\label{p:Lagrange remainder}
Let $f$ be an analytic function defined on an open set containing the interpolation points $z_1$, $z_2$, \dots, $z_\N$ {\rm(}counted according to their multiplicities{\rm)}. Let a polynomial $p$ of degree $\le N-1$ satisfy interpolation conditions~\eqref{e:interpolation problem:poly:fold}.
Then for all $z$ from the domain of definition of $f$ one has
\begin{equation*}
f(z)-p(z)=\Omega(z)\,f[z_1,z_2,\dots,z_\N,z].
\end{equation*}
\end{proposition}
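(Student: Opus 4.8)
The plan is to combine the two contour-integral representations already at our disposal, namely formula~\eqref{e:Walsh-Hermite:f-p} of Theorem~\ref{t:f-p via Gamma} and the integral formula of Theorem~\ref{t:f[] via Gamma}, by treating the argument $z$ as an additional $(\N+1)$-st interpolation node. The point is that the ``remainder'' integrand in~\eqref{e:Walsh-Hermite:f-p} is, up to the factor $\Omega(z)$, exactly the integrand that Theorem~\ref{t:f[] via Gamma} evaluates as a divided difference on the enlarged node list.

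First I would fix a point $z$ in the domain of $f$ and choose a contour $\Gamma$ enclosing all the nodes $z_1,\dots,z_\N$ together with the point $z$, while staying inside the region where $f$ is analytic. Applying formula~\eqref{e:Walsh-Hermite:f-p} then gives
\[
f(z)-p(z)=\Omega(z)\,\frac1{2\pi i}\int_{\Gamma}\frac{f(\lambda)\,d\lambda}{\Omega(\lambda)(\lambda-z)},
\]
where $\Omega(\lambda)=\prod_{i=1}^{\N}(\lambda-z_i)$. The key observation is that the denominator $\Omega(\lambda)(\lambda-z)$ is precisely the product $\prod_{i=1}^{\N}(\lambda-z_i)\cdot(\lambda-z)$ associated with the list of $\N+1$ nodes $z_1,\dots,z_\N,z$. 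Hence Theorem~\ref{t:f[] via Gamma}, applied to these points (which $\Gamma$ encloses), identifies the integral as the divided difference
\[
f[z_1,\dots,z_\N,z]=\frac1{2\pi i}\int_{\Gamma}\frac{f(\lambda)\,d\lambda}{\Omega(\lambda)(\lambda-z)}.
\]
Substituting this into the previous display yields $f(z)-p(z)=\Omega(z)\,f[z_1,\dots,z_\N,z]$, as claimed.

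I expect the only genuine obstacle to be bookkeeping rather than substance: one must construct a suitable contour $\Gamma$ when the domain of $f$ is disconnected, and check that the formula survives when $z$ happens to coincide with one of the nodes $z_i$. For the contour, one may enclose the finitely many relevant points by a union of small loops contained in the domain, so no real difficulty arises. In the coincidence case both sides vanish — the left-hand side because $p$ interpolates $f$ at $z_i$, and the right-hand side because $\Omega(z_i)=0$ — while the divided difference with the repeated argument is read through the limiting/derivative convention of~\eqref{e:divided differences}; the contour integral accommodates this automatically, since the extra factor $(\lambda-z)$ merely raises the multiplicity of $z_i$ in $\Omega(\lambda)(\lambda-z)$, which is exactly what Theorem~\ref{t:f[] via Gamma} permits.
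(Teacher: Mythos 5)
Your proposal is correct and follows essentially the same route as the paper, which proves this proposition by combining Theorem~\ref{t:f-p via Gamma} with Theorem~\ref{t:f[] via Gamma} applied to the enlarged node list $z_1,\dots,z_\N,z$. You simply spell out the details (choice of contour, the coincidence case $z=z_i$) that the paper leaves implicit.
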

\begin{proof}
The proof follows from Theorems~\ref{t:f-p via Gamma} and~\ref{t:f[] via Gamma}. \end{proof}

\section{Rational interpolation}\label{s:rat}
A \emph{rational function} is a function $r$ of a complex variable that can be represented in the form
\begin{equation*}
r(z)=\frac{u(z)}{v(z)}=\frac{a_0+a_1z+\dots+a_Lz^L}{b_0+b_1z+\dots+b_Mz^M},
\end{equation*}
where $u$ and $v$ are polynomials. We call the pair $[L/M]$ the \emph{degree} of $r$.

Let $z_1,z_2,\dots,z_m\in\mathbb C$ be given distinct points called \emph{points {\rm(or} nodes{\rm)} of interpolation} and $n_1,n_2,\dots, n_m\in\mathbb N$ be their \emph{multiplicities}. Let $f$ be an analytic function defined on a neighbourhood of the points of interpolation. The problem of \emph{rational interpolation} is~\cite{Baker-Graves-Morris96:eng,Walsh:eng} the problem of finding a rational function $r$ of degree $[L/M]$ or less satisfying the conditions
\begin{equation}\label{e:interpolation problem:rat:fold}
r^{(j)}(z_k)=f^{(j)}(z_k),\qquad k=1,\dots,m;\;j=0,1,\dots,n_k-1.
\end{equation}
Thus~\eqref{e:interpolation problem:rat:fold} consists of
\begin{equation*}
N=\sum_{k=1}^mn_k
\end{equation*}
conditions. Usually it is assumed that $L+M\le N-1$. It is also often assumed that the denominator $v$ is given. In the latter case, it is reasonable to assume that $L\le N-1$. If $v(z)\equiv1$, the problem of the rational interpolation is reduced to the \emph{polynomial} one.

\begin{proposition}\label{p:ratio interp}
Let the points of interpolation $z_1$, $z_2$, \dots, $z_m$ have multiplicities $n_1$ $n_2$, \dots, $n_m$. Let $u$, $v$, and $f$ be analytic functions defined on a neighborhood of the points $z_1$, $z_2$, \dots, $z_m${\rm;} $v(z_k)\neq0$, $k=1,2,\dots,m$. Then the interpolation conditions
\begin{equation}\label{e:inter con1}
\Bigl(\frac{u}{v}\Bigr)^{(j)}(z_k)=f^{(j)}(z_k),\qquad k=1,\dots,m;\;j=0,1,\dots,n_k-1,
\end{equation}
are equivalent to the interpolation conditions
\begin{equation}\label{e:inter con2}
u^{(j)}(z_k)=(vf)^{(j)}(z_k),\qquad k=1,\dots,m;\;j=0,1,\dots,n_k-1.
\end{equation}
\end{proposition}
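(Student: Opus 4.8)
The plan is to reduce the equivalence of the two sets of interpolation conditions to an elementary fact about Taylor coefficients. The key observation is that a condition of the form $g^{(j)}(z_k)=h^{(j)}(z_k)$ for all $j=0,1,\dots,n_k-1$ is precisely the statement that the analytic functions $g$ and $h$ agree to order $n_k$ at $z_k$, i.e. their difference $g-h$ has a zero of multiplicity at least $n_k$ at $z_k$. So I would first restate both~\eqref{e:inter con1} and~\eqref{e:inter con2} in this language: \eqref{e:inter con1} says $\frac uv-f$ vanishes to order $n_k$ at each $z_k$, while \eqref{e:inter con2} says $u-vf$ vanishes to order $n_k$ at each $z_k$.

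Next I would exploit the factorization $u-vf=v\bigl(\frac uv-f\bigr)$, valid on a neighborhood of each $z_k$ because $v(z_k)\neq0$ guarantees $v$ is nonvanishing, hence $\frac uv$ is analytic, near $z_k$. The central step is then the standard multiplicativity of the order of vanishing: if $\phi$ and $\psi$ are analytic near a point $z_k$ and $\psi(z_k)\neq0$, then the product $\psi\phi$ vanishes to the same order at $z_k$ as $\phi$ does, since the nonzero factor $\psi(z_k)$ does not raise or lower the order of the leading Taylor term. Applying this with $\psi=v$ and $\phi=\frac uv-f$ shows that $u-vf$ and $\frac uv-f$ have exactly the same order of vanishing at each $z_k$. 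In particular one vanishes to order $\ge n_k$ if and only if the other does, which is precisely the claimed equivalence of~\eqref{e:inter con1} and~\eqref{e:inter con2}.

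To make the multiplicativity step rigorous I would argue at the level of Taylor expansions about $z_k$. Writing $\phi(z)=\sum_{\ell\ge0}c_\ell(z-z_k)^\ell$ and $v(z)=\sum_{\ell\ge0}d_\ell(z-z_k)^\ell$ with $d_0=v(z_k)\neq0$, the order of vanishing of $\phi$ is the least $\ell$ with $c_\ell\neq0$, and the lowest-order coefficient of the product $v\phi$ is $d_0c_\ell$, which is nonzero exactly when $c_\ell$ is. Thus the first $n_k$ Taylor coefficients of $v\phi$ vanish if and only if the first $n_k$ coefficients of $\phi$ vanish, and since vanishing of the first $n_k$ coefficients is equivalent to $(v\phi)^{(j)}(z_k)=0$ (respectively $\phi^{(j)}(z_k)=0$) for $j=0,\dots,n_k-1$, the equivalence follows directly.

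I do not anticipate a serious obstacle here; the only point requiring care is the hypothesis $v(z_k)\neq0$, which is exactly what is needed both to make $\frac uv$ analytic at $z_k$ (so that condition~\eqref{e:inter con1} even makes sense as a statement about derivatives of an analytic function) and to ensure that multiplying by $v$ preserves the order of vanishing. Without this nonvanishing hypothesis the product could introduce or cancel zeros and the equivalence would fail, so I would be sure to flag where it is used rather than grinding through the two Leibniz-rule derivative computations in full.
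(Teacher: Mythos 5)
Your proof is correct, and it takes a genuinely different route from the paper's. The paper argues directly with derivatives: it writes $u=\frac uv\cdot v$ and expands $u^{(m)}(z_k)$ by the Leibniz rule, substituting $\bigl(\frac uv\bigr)^{(j)}(z_k)=f^{(j)}(z_k)$ to get $(vf)^{(m)}(z_k)$, and then runs a second, separate Leibniz computation with $\frac uv=uv^{-1}$ for the converse. You instead recast both sets of conditions as statements that $\frac uv-f$, respectively $u-vf=v\bigl(\frac uv-f\bigr)$, vanish to order at least $n_k$ at $z_k$, and invoke the fact that multiplying by a function nonvanishing at $z_k$ preserves the order of vanishing. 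Your version handles both implications at once, is symmetric, and makes the role of the hypothesis $v(z_k)\neq0$ more transparent (it is exactly what makes $v$ a unit in the local ring of germs at $z_k$, so that multiplication by $v$ neither creates nor destroys zeros); the small price is that you must separately note that $\frac uv$ is analytic near $z_k$ so that \eqref{e:inter con1} makes sense, and that the induction hidden in ``the first $n_k$ Cauchy-product coefficients of $v\phi$ vanish iff those of $\phi$ do'' uses $d_0\neq0$ --- both of which you do address. The paper's two Leibniz computations are more pedestrian but require no reformulation. Either argument is complete and acceptable.
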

\begin{proof}
Let conditions~\eqref{e:inter con1} be satisfied. Then for all $m=0,1,\dots,n_k-1$ we have (the argument $z_k$ is omitted for brevity)
\begin{equation*}
u^{(m)}=\Bigl(\frac uv\cdot v\Bigr)^{(m)}=\sum_{j=0}^m\binom{m}{j}\Bigl(\dfrac uv\Bigr)^{(j)}v^{(m-j)}
=\sum_{j=0}^m\binom{m}{j}(f)^{(j)}v^{(m-j)}=(vf)^{(m)}.
\end{equation*}

Conversely, let conditions~\eqref{e:inter con2} be satisfied. Then for all $m=0,1,\dots,n_k-1$ we have (the argument $z_k$ is again omitted for brevity)
\begin{align*}
\Bigl(\frac uv\Bigr)^{(m)}&=(uv^{-1})^{(m)}=\sum_{j=0}^m\binom{m}{j}u^{(j)}(v^{-1})^{(m-j)}
=\sum_{j=0}^m\binom{m}{j}(vf)^{(j)}(v^{-1})^{(m-j)}\\
&=\bigl[(vf)v^{-1}\bigr]^{(m)}=f^{(m)}.\qed
\end{align*}
\renewcommand\qed{}
\end{proof}

\begin{corollary}\label{c:Walsh}
Let the points of interpolation $z_1$, $z_2$, \dots, $z_m$ have multiplicities $n_1$ $n_2$, \dots, $n_m$. Let $N=\sum_{k=1}^mn_k$. Let $f$ be an analytic function defined on a neighborhood of the points $z_1$, $z_2$, \dots, $z_m$.
Let $v$ be a given polynomial such that $v(z_k)\neq0$, $k=1,2,\dots,m$. Then there exists a unique polynomial $u$ of degree $L\le N-1$ such that the rational function $r=\frac uv$ satisfies interpolation conditions~\eqref{e:interpolation problem:rat:fold}.
\end{corollary}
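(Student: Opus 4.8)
The plan is to reduce the rational interpolation problem to the polynomial one and then invoke the existence-and-uniqueness result for polynomial interpolation already recorded in Proposition~\ref{p:interpol is unique}. The bridge between the two is Proposition~\ref{p:ratio interp}. Since $v$ is a polynomial with $v(z_k)\neq0$ for $k=1,2,\dots,m$, and since $f$ is analytic on a neighborhood of the points $z_1,z_2,\dots,z_m$, the hypotheses of Proposition~\ref{p:ratio interp} are met for any candidate numerator $u$. That proposition then tells us that, for a fixed $v$, the rational interpolation conditions~\eqref{e:interpolation problem:rat:fold} for $r=\frac uv$ are \emph{equivalent} to the polynomial interpolation conditions $u^{(j)}(z_k)=(vf)^{(j)}(z_k)$ of~\eqref{e:inter con2}. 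Thus the whole statement reduces to finding a unique polynomial $u$ of degree $\le N-1$ that interpolates the function $vf$ at the given nodes with the given multiplicities.

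To carry this out, I would first observe that the function $g:=vf$ is analytic on a neighborhood of $z_1,z_2,\dots,z_m$, being the product of the entire function $v$ and the locally analytic function $f$. Consequently Proposition~\ref{p:interpol is unique}, applied with $g=vf$ in place of $f$, yields a unique polynomial $u$ of degree $\le N-1$ satisfying $u^{(j)}(z_k)=g^{(j)}(z_k)=(vf)^{(j)}(z_k)$ for $k=1,\dots,m$ and $j=0,1,\dots,n_k-1$, i.e.\ satisfying~\eqref{e:inter con2}. Here it is worth noting that the total number of these conditions is exactly $N=\sum_{k=1}^m n_k$, which matches the number of free coefficients of a polynomial of degree $\le N-1$, so the degree bound $L\le N-1$ asserted in the corollary is precisely the bound delivered by Proposition~\ref{p:interpol is unique}.

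It then remains only to translate the conclusion back through the equivalence. For existence, the polynomial $u$ just produced gives a rational function $r=\frac uv$ which, by the equivalence~\eqref{e:inter con1}$\Leftrightarrow$\eqref{e:inter con2} of Proposition~\ref{p:ratio interp}, satisfies the rational interpolation conditions~\eqref{e:interpolation problem:rat:fold}. For uniqueness, suppose $u_1$ and $u_2$ are two polynomials of degree $\le N-1$ such that both $\frac{u_1}v$ and $\frac{u_2}v$ satisfy~\eqref{e:interpolation problem:rat:fold}; applying Proposition~\ref{p:ratio interp} in the forward direction to each shows that $u_1$ and $u_2$ both satisfy~\eqref{e:inter con2}, whence the uniqueness clause of Proposition~\ref{p:interpol is unique} forces $u_1=u_2$.

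I do not expect a genuine obstacle here, since the argument is essentially bookkeeping. The only point that requires a moment of care is that the equivalence in Proposition~\ref{p:ratio interp} must be used as a true biconditional, so that uniqueness transfers from the polynomial side back to the rational side; this is exactly why the proposition was stated as an equivalence rather than a one-directional implication.
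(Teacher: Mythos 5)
Your proposal is correct and follows the same route as the paper's own proof: reduce via Proposition~\ref{p:ratio interp} to the polynomial interpolation of $vf$ and then invoke the existence and uniqueness from Proposition~\ref{p:interpol is unique}. You merely spell out the bookkeeping (existence and uniqueness transfer through the equivalence) that the paper leaves implicit.
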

\begin{proof}
By Proposition~\ref{p:ratio interp}, it is enough to show that there exists a poly\-nomial $u$ that interpolates the function $vf$. By Proposition~\ref{p:interpol is unique}, this problem has a unique solution.
\end{proof}

\begin{proposition}\label{p:Lagrange remainder:r}
Let $f$ be an analytic function defined on an open set $U$ containing the points of interpolation $z_1$, $z_2$, \dots, $z_\N$ {\rm(}not necessarily different{\rm)}.
Let a rational function $r=\frac uv$ of degree $[L/M]$ satisfy interpolation conditions\footnote{Note that to check (5), one should first calculate the multiplicities of the interpolation points.}~\eqref{e:interpolation problem:rat:fold}, $L\le N-1$, and $v(z_k)\neq0$, $k=1,2,\dots,N$. Then for all $z\in U$ such that $v(z)\neq0$ one has
\begin{equation*}
f(z)-r(z)=\frac{\Omega(z)}{v(z)}\,\bigl(vf\bigr)[z_1,z_2,\dots,z_\N,z],
\end{equation*}
where 
\begin{equation*}
\Omega(z)=\prod_{k=1}^N(z-z_k).
\end{equation*}
\end{proposition}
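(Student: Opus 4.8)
The plan is to reduce the rational remainder formula to the polynomial one already proved in Proposition~\ref{p:Lagrange remainder}, the crucial observation being that the numerator $u$ is nothing but the polynomial interpolant of the product $vf$. First I would invoke Proposition~\ref{p:ratio interp}: since the rational function $r=\frac uv$ satisfies the interpolation conditions~\eqref{e:interpolation problem:rat:fold} and $v(z_k)\neq0$ for $k=1,2,\dots,\N$, the equivalent polynomial conditions
\[
u^{(j)}(z_k)=(vf)^{(j)}(z_k)
\]
hold at each node (to the appropriate order given by its multiplicity). Because $v$ is a polynomial, hence entire, and $f$ is analytic on $U$, the product $vf$ is analytic on all of $U$; and because $L\le N-1$, the numerator $u$ is a polynomial of degree $\le N-1$. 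Thus $u$ is exactly a polynomial interpolant of the analytic function $vf$ at the points $z_1,z_2,\dots,z_\N$ (counted with multiplicities), i.e.\ it satisfies the polynomial interpolation conditions~\eqref{e:interpolation problem:poly:fold} applied to $vf$.

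Next I would apply Proposition~\ref{p:Lagrange remainder} directly to the function $vf$ together with its interpolating polynomial $u$. This yields, for every $z\in U$,
\[
(vf)(z)-u(z)=\Omega(z)\,(vf)[z_1,z_2,\dots,z_\N,z],
\]
that is, $v(z)f(z)-u(z)=\Omega(z)\,(vf)[z_1,z_2,\dots,z_\N,z]$, where $\Omega(z)=\prod_{k=1}^N(z-z_k)$ is precisely the nodal polynomial appearing in the statement. Finally, at any point $z\in U$ with $v(z)\neq0$, I would divide both sides by $v(z)$ and use $r=\frac uv$ to obtain
\[
f(z)-r(z)=\frac{\Omega(z)}{v(z)}\,(vf)[z_1,z_2,\dots,z_\N,z],
\]
which is the asserted identity.

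The argument carries no essential obstacle, since it is a clean composition of two results stated earlier; the only points that need a word of care are the verification of the hypotheses of those two propositions. Specifically, one must note that $vf$ inherits analyticity on the whole of $U$ from $f$ (the factor $v$ being entire), so that Proposition~\ref{p:Lagrange remainder} genuinely applies to $vf$, and that the degree bound $L\le N-1$ is exactly what guarantees $u$ is an admissible interpolating polynomial of degree $\le N-1$. With these observations in place the conclusion follows immediately upon division by $v(z)$.
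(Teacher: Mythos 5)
Your proof is correct and follows essentially the same route as the paper: invoke Proposition~\ref{p:ratio interp} to see that $u$ is the polynomial interpolant of $vf$, apply Proposition~\ref{p:Lagrange remainder} to $vf$, and divide by $v(z)$. Your write-up is in fact slightly cleaner, since the paper's displayed formula contains a stray extra argument $z$ at the front of the divided difference $\bigl(vf\bigr)[z,z_1,\dots,z_N,z]$, which you correctly omit.
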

\begin{proof} 
By Proposition~\ref{p:ratio interp}, the polynomial $u$ interpolates the function $vf$. Therefore, by Proposition~\ref{p:Lagrange remainder},
\begin{equation*}
v(z)f(z)-u(z)=\Omega(z)\bigl(vf\bigr)[z,z_1,z_2,\dots,z_\N,z].
\end{equation*}
Hence
\begin{equation*}
f(z)-\frac{u(z)}{v(z)}=\frac{\Omega(z)}{v(z)}\bigl(vf\bigr)[z,z_1,z_2,\dots,z_\N,z].\qed
\end{equation*}
\renewcommand\qed{}
\end{proof}

\section{The estimate}\label{s:estimate}
In this section, we present our estimate and its variants.

\begin{theorem}\label{t:Mathias:r1}
Let $A\in\mathbb C^{n\times n}${\rm;} $z_1$, $z_2$, \dots, $z_{\N}\in\mathbb C$~be arbitrary {\rm(}possibly repetitive{\rm)} points of interpolation{\rm;} $f$ be an analytic function defined on a neighbor\-hood of the convex hull of the union of the spectrum $\sigma(A)$ of the matrix $A$ and the points $z_1$, $z_2$, \dots, $z_{\N}${\rm;} a rational function $r=\frac uv$ of degree $[L/M]$ satisfy interpolation conditions~\eqref{e:interpolation problem:rat:fold}{\rm;} $L\le N-1${\rm;}  $v(z_k)\neq0$, $k=1,2,\dots,N$, and $v(\lambda)\neq0$ for $\lambda\in\sigma(A)$. Then
\begin{multline*}
f(A)-r(A)=\Omega(A)[v(A)]^{-1}\int_0^1\int_0^{t_1}\dots\int_0^{t_{\N-1}}\bigl(vf\bigr)^{{(\N)}}
\bigl((1-t_1)z_1\mathbf1\\
+(t_1-t_2)z_2\mathbf1+
\dots+(t_{\N-1}-t_{\N})z_{\N}\mathbf1+t_{\N}A\bigr)\,dt_{\N}dt_{\N-1}\dots dt_1,
\end{multline*}
where
\begin{equation*}
\Omega(z)=\prod_{k=1}^\N(z-z_k).
\end{equation*}
\end{theorem}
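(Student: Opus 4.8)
The plan is to lift the scalar remainder formula of Proposition~\ref{p:Lagrange remainder:r} to the matrix $A$ through the holomorphic functional calculus, and then to rewrite the resulting matrix divided difference as an integral by means of Proposition~\ref{p:Gelfond(47)}. To organize the argument, I would introduce the function $g(w)=(vf)[z_1,z_2,\dots,z_\N,w]$, which, as a divided difference of the analytic function $vf$ with one free node $w$, is itself analytic in $w$ throughout the domain of $f$ (this is visible already from the integral representation below, or from Theorem~\ref{t:f[] via Gamma}). Proposition~\ref{p:Lagrange remainder:r} then reads $f(z)-r(z)=\frac{\Omega(z)}{v(z)}g(z)$ at every point $z$ of the domain of $f$ where $v(z)\neq0$; in particular this identity of analytic functions holds on a full neighborhood of $\sigma(A)$, since $v(\lambda)\neq0$ for $\lambda\in\sigma(A)$ by hypothesis.

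First I would apply the functional calculus to this identity. The map $h\mapsto h(A)=\frac{1}{2\pi i}\int_\Gamma h(\lambda)R_\lambda\,d\lambda$ is multiplicative on functions analytic near $\sigma(A)$, the polynomial $\Omega$ is entire, and $v(A)$ is invertible because its eigenvalues are the nonzero numbers $v(\lambda)$, $\lambda\in\sigma(A)$. Substituting $A$ into $f-r=\Omega\cdot v^{-1}\cdot g$ therefore yields
\[
f(A)-r(A)=\Omega(A)\,[v(A)]^{-1}\,g(A),
\]
where all three factors are functions of $A$ and hence commute. It remains only to evaluate $g(A)$.

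Next I would apply Proposition~\ref{p:Gelfond(47)} to the $\N+1$ nodes $z_1,\dots,z_\N,w$. This expresses $g(w)$ as the iterated integral over the simplex $0\le t_\N\le t_{\N-1}\le\dots\le t_1\le1$ of $(vf)^{(\N)}$ evaluated at $z_1+(z_2-z_1)t_1+\dots+(z_\N-z_{\N-1})t_{\N-1}+(w-z_\N)t_\N$. Collecting the coefficient of each node rewrites this argument as $(1-t_1)z_1+(t_1-t_2)z_2+\dots+(t_{\N-1}-t_\N)z_\N+t_\N w$; on the simplex these $\N+1$ coefficients are nonnegative and sum to $1$, so the argument is a genuine convex combination of $z_1,\dots,z_\N$ and $w$. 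Writing $c(t)=(1-t_1)z_1+\dots+(t_{\N-1}-t_\N)z_\N$, the argument is $c(t)+t_\N w$, affine in $w$.

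The decisive step, and the one I expect to require the most care, is to pass from this scalar formula for $g$ to $g(A)$. Writing $g(A)=\frac{1}{2\pi i}\int_\Gamma g(\lambda)R_\lambda\,d\lambda$, I would insert the simplex-integral expression for $g(\lambda)$ and exchange the order of the contour and simplex integrals by Fubini's theorem, which is legitimate because the integrand is jointly continuous on the compact product of $\Gamma$ and the closed simplex. For each fixed $t$ with $t_\N>0$, the change of variables $\mu=c(t)+t_\N\lambda$ in the inner contour integral turns $R_\lambda$ into $t_\N$ times the resolvent of $c(t)\mathbf1+t_\N A$ and $d\lambda$ into $t_\N^{-1}\,d\mu$, so it identifies $\frac{1}{2\pi i}\int_\Gamma (vf)^{(\N)}\bigl(c(t)+t_\N\lambda\bigr)R_\lambda\,d\lambda$ with $(vf)^{(\N)}\bigl(c(t)\mathbf1+t_\N A\bigr)$ (the case $t_\N=0$ being immediate, as the integrand is then constant). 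Here the convex-hull hypothesis on the domain of $f$ is exactly what guarantees that $(vf)^{(\N)}$ is analytic on and inside the transformed contour $c(t)+t_\N\Gamma$ enclosing $\sigma\bigl(c(t)\mathbf1+t_\N A\bigr)=c(t)+t_\N\sigma(A)$, since for $\lambda$ near $\sigma(A)$ the point $c(t)+t_\N\lambda$ lies in a neighborhood of $\co\bigl(\sigma(A)\cup\{z_1,\dots,z_\N\}\bigr)$. Substituting back into the $t$-integral reproduces precisely the iterated integral in the statement, which completes the argument.
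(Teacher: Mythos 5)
Your proof is correct and follows essentially the same route as the paper: it combines Proposition~\ref{p:Lagrange remainder:r} with the simplex representation of Proposition~\ref{p:Gelfond(47)} and then substitutes $A$. The only difference is that you carefully justify the final substitution step (via Fubini and the change of variables in the contour integral), which the paper's proof passes over with a bare ``Therefore''; your added detail is sound and welcome but does not change the argument.
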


\begin{proof}
By Proposition~\ref{p:Lagrange remainder:r},
\begin{equation*}
f(z)-r(z)=\frac{\Omega(z)}{v(z)}\,\bigl(vf\bigr)[z_1,z_2,\dots,z_\N,z].
\end{equation*}
On the other hand, by Proposition~\ref{p:Gelfond(47)},
\begin{multline*}
\bigl(vf\bigr)[z_1,z_{2},\dots,z_{\N},z]=\int_0^1\int_0^{t_1}\dots\int_0^{t_{\N-1}}\bigl(vf\bigr)^{{(\N)}}
\bigl(z_1+(z_2-z_1)t_1+\dots\\
+(z_{\N}-z_{\N-1})t_{\N-1}+(z-z_{\N})t_{\N}\bigr)\,dt_{\N}dt_{\N-1}\dots dt_1.
\end{multline*}
Or
\begin{multline*}
\bigl(vf\bigr)[z_1,z_{2},\dots,z_{\N},z]=\int_0^1\int_0^{t_1}\dots\int_0^{t_{\N-1}}\bigl(vf\bigr)^{{(\N)}}
\bigl((1-t_1)z_1\\
+(t_1-t_2)z_2+\dots+(t_{\N-1}-t_{\N})z_{\N}+t_{\N}z\bigr)\,dt_{\N}dt_{\N-1}\dots dt_1.
\end{multline*}
Therefore
\begin{multline*}
f(A)-r(A)=\Omega(A)[v(A)]^{-1}\int_0^1\int_0^{t_1}\dots\int_0^{t_{\N-1}}\bigl(vf\bigr)^{{(\N)}}
\bigl((1-t_1)z_1\mathbf1\\
+(t_1-t_2)z_2\mathbf1+
\dots+(t_{\N-1}-t_{\N})z_{\N}\mathbf1+t_{\N}A\bigr)\,dt_{\N}dt_{\N-1}\dots dt_1.\qed
\end{multline*}
\renewcommand\qed{}
\end{proof}

\begin{theorem}\label{t:Mathias:r4}
Under assumptions of Theorem~\ref{t:Mathias:r1} for any linear functional $\xi$ on the linear space $\mathbb C^{n\times n}$ of matrices, one has
\begin{equation*}
\bigl|\xi\bigl[f(A)-r(A)\bigr]\bigr|\le
\max_{\substack{t\in[0,1]\\\mu\in\co\{z_1,z_{2},\dots,z_{\N}\}}}\biggl|\xi\biggl[\Omega(A)[v(A)]^{-1}
\frac{\bigl(vf\bigr)^{{(\N)}}\bigl((1-t)\mu\mathbf1+tA\bigr)}{\N!}\biggr]\biggr|.\qed
\end{equation*}
\end{theorem}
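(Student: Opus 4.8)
The plan is to apply the functional $\xi$ to the integral identity of Theorem~\ref{t:Mathias:r1} and then bound the resulting scalar integral by the maximum of its integrand times the volume of the domain of integration.

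First I would push $\xi$ inside the integral. Since $\xi$ is a linear functional on the finite-dimensional space $\mathbb C^{n\times n}$, it is automatically continuous and, writing the matrix-valued integral entrywise, commutes with integration; keeping the fixed matrix $\Omega(A)[v(A)]^{-1}$ inside the brackets, Theorem~\ref{t:Mathias:r1} yields
\begin{equation*}
\xi\bigl[f(A)-r(A)\bigr]=\int_0^1\int_0^{t_1}\dots\int_0^{t_{\N-1}}\xi\Bigl[\Omega(A)[v(A)]^{-1}\bigl(vf\bigr)^{{(\N)}}(\omega)\Bigr]\,dt_{\N}\dots dt_1,
\end{equation*}
where $\omega=(1-t_1)z_1\mathbf1+(t_1-t_2)z_2\mathbf1+\dots+(t_{\N-1}-t_{\N})z_{\N}\mathbf1+t_{\N}A$. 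Passing to absolute values and using $\bigl|\int g\bigr|\le\int|g|$ then bounds the left-hand side by the integral of $\bigl|\xi[\,\cdot\,]\bigr|$ over the simplex $0\le t_{\N}\le\dots\le t_1\le1$.

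The heart of the argument is to recognize $\omega$ as a convex combination of $A$ and the interpolation points. On that simplex the coefficients $1-t_1$, $t_1-t_2$, \dots, $t_{\N-1}-t_{\N}$, $t_{\N}$ are nonnegative and telescope to $1$. Writing $t=t_{\N}$ for the weight of $A$ and, when $t<1$,
\begin{equation*}
\mu=\frac{(1-t_1)z_1+(t_1-t_2)z_2+\dots+(t_{\N-1}-t_{\N})z_{\N}}{1-t}\in\co\{z_1,z_2,\dots,z_{\N}\},
\end{equation*}
we get $\omega=(1-t)\mu\mathbf1+tA$; the boundary case $t=1$ gives $\omega=A=(1-t)\mu\mathbf1+tA$ for any $\mu$. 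Thus at every point of the simplex the integrand has the form occurring in the statement, for suitable $t\in[0,1]$ and $\mu\in\co\{z_1,\dots,z_{\N}\}$, and so is bounded pointwise by $\N!$ times the right-hand side of the theorem.

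To finish, I would pull this constant bound out of the integral; the remaining integral is the volume $\int_0^1\int_0^{t_1}\dots\int_0^{t_{\N-1}}dt_{\N}\dots dt_1=1/\N!$ of the simplex, which supplies exactly the factor $1/\N!$ displayed inside the maximum. I expect the only genuinely substantive step to be the convex-combination reparametrization of $\omega$; interchanging $\xi$ with the integral, the triangle inequality for integrals, and the computation of the simplex volume are all routine.
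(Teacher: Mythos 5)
Your proof is correct and follows essentially the same route as the paper: apply $\xi$ to the integral representation of Theorem~\ref{t:Mathias:r1}, interchange $\xi$ with the integral, bound the integrand pointwise after rewriting the argument as the convex combination $(1-t)\mu\mathbf1+tA$ with $t=t_{\N}$, and use that the simplex has volume $1/\N!$. Your treatment is in fact slightly more explicit than the paper's at the one nontrivial point (nonnegativity and telescoping of the weights, and the boundary case $t=1$), which the paper dismisses as ``easy to see.''
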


\begin{remark}\label{r:partial fraction}
The matrix $v(A)$ (as any other polynomial of a matrix) is often badly conditioned. Therefore, a direct calculation of $[v(A)]^{-1}$ can be numerically unstable. To overcome this problem, one can first calculate the partial fraction decomposition of $\lambda\mapsto\Omega(\lambda)/v(\lambda)$ and then substitute $A$ in it.
\end{remark}

\begin{proof}
From Theorem~\ref{t:Mathias:r1} it follows that
\begin{equation}\label{e:long xi}
\begin{split}
\bigl|\xi\bigl[f(A)&-r(A)\bigr]\bigr|
=\biggl|\xi\biggl(\int_0^1\int_0^{t_1}\dots\int_0^{t_{\N-1}}\Omega(A)[v(A)]^{-1}\bigl(vf\bigr)^{{(\N)}}
\bigl((1-t_1)z_1\mathbf1+\dots\\
&+(t_{\N-1}-t_{\N})z_{\N}\mathbf1+t_{\N}A\bigr)\,dt_{\N}dt_{\N-1}\dots dt_1\biggr)\biggr|\\
&=\biggl|\int_0^1\int_0^{t_1}\dots\int_0^{t_{\N-1}}\xi\bigl[\Omega(A)[v(A)]^{-1}\bigl(vf\bigr)^{{(\N)}}
\bigl((1-t_1)z_1\mathbf1+\dots\\
&+(t_{\N-1}-t_{\N})z_{\N}\mathbf1+t_{\N}A\bigr)\bigr]\,dt_{\N}dt_{\N-1}\dots dt_1\biggr|\\
&\le\int_0^1\int_0^{t_1}\dots\int_0^{t_{\N-1}}
\max_{t_1,\dots,t_N}\Bigl|\xi\Bigl[\Omega(A)[v(A)]^{-1}\bigl(vf\bigr)^{{(\N)}}
\bigl((1-t_1)z_1\mathbf1+\dots\\
&+(t_{\N-1}-t_{\N})z_{\N}\mathbf1+t_{\N}A\bigr)\Bigr]\Bigr|\,dt_{\N}dt_{\N-1}\dots dt_1.
\end{split}
\end{equation}

It is easy to see that the complex number
\begin{equation*}
\frac1{1-t_{\N}}\bigl((1-t_1)z_1+(t_1-t_2)z_2+\dots+(t_{\N-1}-t_{\N})z_{\N}\bigr)
\end{equation*}
runs over the convex hull $\co\{z_1,z_{2},\dots,z_{\N}\}$, and
\begin{equation*}
\int_0^1\int_0^{t_1}\dots\int_0^{t_{\N-1}}\,dt_{\N}\dots dt_1=\frac1{\N!}.
\end{equation*}
Therefore estimate~\eqref{e:long xi} implies that
\begin{equation*}
\bigl|\xi\bigl[f(A)-r(A)\bigr]\bigr|\le
\max_{\substack{t\in[0,1]\\\mu\in\co\{z_1,z_{2},\dots,z_{\N}\}}}\biggl|\xi\biggl[\Omega(A)[v(A)]^{-1}
\frac{\bigl(vf\bigr)^{{(\N)}}\bigl((1-t)\mu\mathbf1+tA\bigr)}{\N!}\biggr]\biggr|.\qed
\end{equation*}
\renewcommand\qed{}
\end{proof}

\begin{corollary}\label{c:Mathias:r5}
Under assumptions of Theorem~\ref{t:Mathias:r1} for any $b,d\in\mathbb C^n$,
\begin{multline*}
\bigl|d^H(f(A)-r(A))b\bigr|\\
\le\max_{\substack{t\in[0,1]\\\mu\in\co\{z_1,z_{2},\dots,z_{\N}\}}}\biggl|d^H\biggl[\Omega(A)[v(A)]^{-1}
\frac{\bigl(vf\bigr)^{{(\N)}}
\bigl((1-t)\mu\mathbf1+tA\bigr)}{\N!}\biggr]b\biggr|.
\end{multline*}
\end{corollary}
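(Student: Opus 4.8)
The plan is to recognize that Corollary~\ref{c:Mathias:r5} is nothing but the specialization of Theorem~\ref{t:Mathias:r4} to one particular, naturally occurring linear functional. The quantity $d^H(f(A)-r(A))b$ on the left-hand side has exactly the shape $\xi[f(A)-r(A)]$ once we choose $\xi$ appropriately, so the entire statement should fall out of the already-proved theorem with no genuine analytic work.

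Concretely, first I would fix $b,d\in\mathbb{C}^n$ and define the map $\xi:\mathbb{C}^{n\times n}\to\mathbb{C}$ by
\begin{equation*}
\xi(X)=d^H X b,\qquad X\in\mathbb{C}^{n\times n}.
\end{equation*}
The next step is the single point that needs checking: that $\xi$ is indeed a linear functional on the space $\mathbb{C}^{n\times n}$. This is immediate, since for $X,Y\in\mathbb{C}^{n\times n}$ and $\alpha,\beta\in\mathbb{C}$ one has $\xi(\alpha X+\beta Y)=d^H(\alpha X+\beta Y)b=\alpha\,d^H X b+\beta\,d^H Y b=\alpha\,\xi(X)+\beta\,\xi(Y)$; the conjugate transpose $d^H$ and the right multiplication by $b$ are each linear in the matrix argument, so their composition is too.

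Having established that $\xi$ is a legitimate linear functional, I would simply invoke Theorem~\ref{t:Mathias:r4} with this $\xi$. The left-hand side $|\xi[f(A)-r(A)]|$ becomes $|d^H(f(A)-r(A))b|$, and inside the maximum on the right-hand side the expression $\xi[\,\cdot\,]$ becomes $d^H[\,\cdot\,]b$ applied to the matrix $\Omega(A)[v(A)]^{-1}(vf)^{(\N)}((1-t)\mu\mathbf1+tA)/\N!$, which is exactly the asserted bound. All hypotheses of Theorem~\ref{t:Mathias:r4} (equivalently, of Theorem~\ref{t:Mathias:r1}) are carried over unchanged, since the corollary is stated under the same assumptions.

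I do not expect any real obstacle here: the content is entirely contained in Theorem~\ref{t:Mathias:r4}, and the corollary merely packages it for the practically important functionals of the form $X\mapsto d^H X b$. The only thing to be careful about is the trivial verification of linearity above, which guarantees that the general theorem genuinely applies to this concrete $\xi$.
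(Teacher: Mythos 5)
Your proposal is correct and coincides with the paper's own proof: the authors likewise observe that $X\mapsto d^HXb$ is a linear functional on $\mathbb{C}^{n\times n}$ and then apply Theorem~\ref{t:Mathias:r4}. The explicit verification of linearity you include is the only (trivial) detail the paper leaves implicit.
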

\begin{proof}
It suffices to observe that the rule $A\mapsto d^HAb$ is a linear functional on the space of matrices and refer to Theorem~\ref{t:Mathias:r4}.
\end{proof}

\begin{corollary}\label{c:Mathias:r6}
Under assumptions of Theorem~\ref{t:Mathias:r1}  for any $b\in\mathbb C^n$ {\rm(}and the Euclidian norm $\lVert\cdot\rVert_2$ on $\mathbb C^n${\rm)},
\begin{equation*}
\bigl\lVert\bigl(f(A)-r(A)\bigr)b\bigr\rVert_2\le
\max_{\substack{t\in[0,1]\\\mu\in\co\{z_1,z_{2},\dots,z_{\N}\}}}
\biggl\lVert\Omega(A)[v(A)]^{-1}
\frac{\bigl(vf\bigr)^{{(\N)}}
\bigl((1-t)\mu\mathbf1+tA\bigr)}{\N!}b\biggr\rVert_2.
\end{equation*}
\end{corollary}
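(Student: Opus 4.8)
The plan is to derive Corollary~\ref{c:Mathias:r6} as a consequence of Corollary~\ref{c:Mathias:r5}, just as that corollary was obtained from Theorem~\ref{t:Mathias:r4}. The key observation is that the Euclidean norm of a vector admits a dual (variational) representation: for any $w\in\mathbb C^n$ one has $\lVert w\rVert_2=\max_{\lVert d\rVert_2=1}\lvert d^Hw\rvert$, the maximum being attained at $d=w/\lVert w\rVert_2$ (or arbitrary $d$ when $w=0$). This lets me convert the vector-norm statement I want into a statement about the scalar quantities $d^H(\cdot)b$ that Corollary~\ref{c:Mathias:r5} already controls.

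First I would fix an arbitrary unit vector $d\in\mathbb C^n$ (with $\lVert d\rVert_2=1$) and apply Corollary~\ref{c:Mathias:r5} to the pair $b,d$, giving
\begin{equation*}
\bigl|d^H(f(A)-r(A))b\bigr|\le\max_{\substack{t\in[0,1]\\\mu\in\co\{z_1,z_{2},\dots,z_{\N}\}}}\biggl|d^H\biggl[\Omega(A)[v(A)]^{-1}\frac{\bigl(vf\bigr)^{{(\N)}}\bigl((1-t)\mu\mathbf1+tA\bigr)}{\N!}\biggr]b\biggr|.
\end{equation*}
The next step is to bound the right-hand side uniformly in $d$. For each fixed $t$ and $\mu$, the inner quantity is $\lvert d^Hw_{t,\mu}\rvert$ where $w_{t,\mu}=\Omega(A)[v(A)]^{-1}\frac{(vf)^{(\N)}((1-t)\mu\mathbf1+tA)}{\N!}\,b$; by Cauchy--Schwarz $\lvert d^Hw_{t,\mu}\rvert\le\lVert d\rVert_2\lVert w_{t,\mu}\rVert_2=\lVert w_{t,\mu}\rVert_2$. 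Taking the maximum over $t$ and $\mu$ therefore replaces the right-hand side by $\max_{t,\mu}\lVert w_{t,\mu}\rVert_2$, which is exactly the right-hand side of the desired corollary and is independent of $d$.

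Finally I would take the supremum over all unit vectors $d$ on the left. Since $\sup_{\lVert d\rVert_2=1}\lvert d^H(f(A)-r(A))b\rvert=\lVert(f(A)-r(A))b\rVert_2$ by the dual representation of the norm, the left-hand side becomes precisely the quantity to be estimated, and the asserted inequality follows. I do not expect a genuine obstacle here: the argument is a standard duality-plus-Cauchy--Schwarz reduction, and the only point requiring a moment's care is that the bound obtained after Cauchy--Schwarz no longer depends on $d$, so that taking the supremum over $d$ on the left is legitimate and does not disturb the right-hand side.
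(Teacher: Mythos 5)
Your argument is correct and is essentially the same as the paper's: the paper simply takes the single optimal choice $d=\bigl(f(A)-r(A)\bigr)b\big/\lVert\bigl(f(A)-r(A)\bigr)b\rVert_2$ (handling the zero case separately) and invokes Corollary~\ref{c:Mathias:r5}, whereas you phrase the same duality via a supremum over all unit vectors $d$ plus Cauchy--Schwarz. The two differ only in presentation, not in substance.
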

\begin{proof}
If $\bigl(f(A)-r(A)\bigr)b=0$ the proof is evident.
If $\bigl(f(A)-r(A)\bigr)b\neq0$, we set $d=\bigl(f(A)-r(A)\bigr)b\big/\lVert \bigl(f(A)-r(A)\bigr)b\rVert_2$. After that we refer to Corollary~\ref{c:Mathias:r5}.
\end{proof}

\begin{corollary}\label{c:Mathias:r2}
Under assumptions of Theorem~\ref{t:Mathias:r1} {\rm(}for any norm on the space of matrices{\rm)},
\begin{equation*}
\bigl\lVert f(A)-r(A)\bigr\rVert\le
\max_{\substack{t\in[0,1]\\\mu\in\co\{z_1,z_{2},\dots,z_{\N}\}}}\biggl\lVert\Omega(A)[v(A)]^{-1}
\frac{\bigl(vf\bigr)^{{(\N)}}
\bigl((1-t)\mu\mathbf1+tA\bigr)}{\N!}\biggr\rVert.
\end{equation*}
\end{corollary}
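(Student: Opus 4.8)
The plan is to deduce this statement from the functional estimate of Theorem~\ref{t:Mathias:r4} by invoking the standard duality between a norm and the continuous linear functionals on a finite-dimensional normed space. The idea is that Corollary~\ref{c:Mathias:r6} handled the vector-norm case by selecting a single optimal $d$; here the analogous role will be played by an optimal \emph{norming functional}, and the inequality from Theorem~\ref{t:Mathias:r4} can then be applied to this one functional.

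First I would recall that for any norm $\lVert\cdot\rVert$ on the finite-dimensional space $\mathbb C^{n\times n}$ one has, for every matrix $X$,
\begin{equation*}
\lVert X\rVert=\max_{\lVert\xi\rVert^{*}\le1}\bigl\lvert\xi(X)\bigr\rvert,
\end{equation*}
where $\lVert\xi\rVert^{*}=\sup_{\lVert Y\rVert\le1}\lvert\xi(Y)\rvert$ denotes the dual norm, and the maximum is attained at some norming functional; in finite dimensions this is a straightforward consequence of the Hahn--Banach theorem together with the compactness of the dual unit ball. Applying this to the matrix $X=f(A)-r(A)$, I would fix a functional $\xi_{0}$ with $\lVert\xi_{0}\rVert^{*}=1$ and
\begin{equation*}
\bigl\lVert f(A)-r(A)\bigr\rVert=\bigl\lvert\xi_{0}\bigl(f(A)-r(A)\bigr)\bigr\rvert.
\end{equation*}

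Next I would apply Theorem~\ref{t:Mathias:r4} to this particular $\xi_{0}$, obtaining
\begin{equation*}
\bigl\lvert\xi_{0}\bigl(f(A)-r(A)\bigr)\bigr\rvert\le\max_{\substack{t\in[0,1]\\\mu\in\co\{z_1,\dots,z_{\N}\}}}\biggl\lvert\xi_{0}\biggl[\Omega(A)[v(A)]^{-1}\frac{\bigl(vf\bigr)^{(\N)}\bigl((1-t)\mu\mathbf1+tA\bigr)}{\N!}\biggr]\biggr\rvert.
\end{equation*}
Writing $Y(t,\mu)$ for the matrix inside the brackets, the dual-norm inequality $\lvert\xi_{0}(Y)\rvert\le\lVert\xi_{0}\rVert^{*}\lVert Y\rVert=\lVert Y\rVert$ bounds each term on the right by $\lVert Y(t,\mu)\rVert$, and taking the maximum over $t$ and $\mu$ gives exactly the asserted estimate.

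I do not expect any genuine obstacle: the argument is a routine application of finite-dimensional duality, and the entire content is supplied by Theorem~\ref{t:Mathias:r4}. The one point meriting a word of care is the existence of the norming functional $\xi_{0}$, which is precisely what Hahn--Banach (or a compactness argument) guarantees; everything else is a short chain of inequalities. It is worth noting that this proof works for an arbitrary norm on $\mathbb C^{n\times n}$, whereas the preceding corollaries exploited the special structure of the Euclidean vector norm to exhibit the optimal $d$ explicitly.
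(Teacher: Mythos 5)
Your proposal is correct and follows essentially the same route as the paper: the paper likewise takes a norming functional $\xi$ with $\lVert\xi\rVert=1$ furnished by the Hahn--Banach theorem, applies Theorem~\ref{t:Mathias:r4} to it, and then bounds $|\xi[\,\cdot\,]|$ by the matrix norm. No substantive difference.
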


\begin{proof}
Let $\xi$~be a linear functional on the space of matrices (equipped by an arbitrary norm) such that $\lVert\xi\rVert=1$ and
\begin{equation*}
\lVert f(A)-r(A)\rVert=\xi\bigl(f(A)-r(A)\bigr).
\end{equation*}
Such a functional exists by the Hahn--Banach theorem~\cite[Theorem 2.7.4]{Hille-Phillips:eng}. Then from Theorem~\ref{t:Mathias:r4} we have
\begin{align*}
\lVert f(A)&-r(A)\rVert
=\xi\bigl[f(A)-r(A)\bigr]=\bigl|\xi\bigl[f(A)-r(A)\bigr]\bigr|\\
&\le
\max_{\substack{t\in[0,1]\\\mu\in\co\{z_1,z_{2},\dots,z_{\N}\}}}\Bigl|\xi\bigl[\Omega(A)[v(A)]^{-1}
\frac{\bigl(vf\bigr)^{{(\N)}}\bigl((1-t)\mu\mathbf1+tA\bigr)}{\N!}\bigr]\Bigr|\\
&\le
\max_{\substack{t\in[0,1]\\\mu\in\co\{z_1,z_{2},\dots,z_{\N}\}}}\Bigl\lVert\Omega(A)[v(A)]^{-1}
\frac{\bigl(vf\bigr)^{{(\N)}}\bigl((1-t)\mu\mathbf1+tA\bigr)}{\N!}\Bigr\rVert.\qed
\end{align*}
\renewcommand\qed{}
\end{proof}

\begin{corollary}\label{c:Mathias:r3}
Let a function $f$ be analytic on an open circle of radius $r$ centered at a point $z_0$ and the spectrum of a square matrix $A$ be contained in this circle.
Then the difference between the exact value $f(A)$ and the Pad\'e approximant $r=\frac uv$ of degree $[L/M]$ of the function $f$ at the point $z_0$ applied to $A$ admits the estimate
\begin{equation*}
\bigl\Vert f(A)-r(A)\bigr\Vert\le
\max_{t\in[0,1]}\Bigl\lVert (A-z_0\mathbf1)^N[v(A)]^{-1}
\frac{\bigl(vf\bigr)^{(N)}\bigl((1-t)z_0\mathbf1+tA\bigr)}{\N!}\Bigr\rVert,
\end{equation*}
where $N=L+M+1$. It is assumed that $v(\lambda)\neq0$ for $\lambda\in\sigma(A)$.
\end{corollary}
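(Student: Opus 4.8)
The plan is to recognize the Padé approximant as the degenerate case of rational Hermite interpolation in which all $N=L+M+1$ interpolation nodes coalesce at the single point $z_0$, and then simply to specialize Corollary~\ref{c:Mathias:r2} to this configuration.

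First I would recall the defining property of the Padé approximant $r=u/v$ of degree $[L/M]$ at $z_0$: the Taylor expansions of $f$ and $r$ about $z_0$ agree through order $N-1$, i.e. $r^{(j)}(z_0)=f^{(j)}(z_0)$ for $j=0,1,\dots,N-1$ with $N=L+M+1$. This is exactly interpolation condition~\eqref{e:interpolation problem:rat:fold} for a single node $z_1=z_0$ of multiplicity $n_1=N$. Equivalently, in the list-of-nodes formalism of Section~\ref{s:rat}, the Padé approximant is the rational interpolant obtained by setting $z_1=z_2=\dots=z_N=z_0$, so it falls under Theorem~\ref{t:Mathias:r1} and hence Corollary~\ref{c:Mathias:r2}.

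Next I would verify the hypotheses of Theorem~\ref{t:Mathias:r1} in this confluent setting. Since the open disk of radius $r$ centered at $z_0$ is convex and contains both $\sigma(A)$ and $z_0$, the convex hull $\co(\sigma(A)\cup\{z_0\})$ lies inside the disk, where $f$ is analytic; the degree bound $L\le N-1$ holds because $L=N-M-1\le N-1$; and $v(\lambda)\neq0$ on $\sigma(A)$ is assumed in the statement. The condition $v(z_k)\neq0$ reduces here to $v(z_0)\neq0$, which holds for the normalized Padé denominator. With all nodes equal to $z_0$ one has $\Omega(z)=\prod_{k=1}^N(z-z_k)=(z-z_0)^N$, so that $\Omega(A)=(A-z_0\mathbf1)^N$, while the convex hull $\co\{z_1,\dots,z_N\}$ collapses to the single point $\{z_0\}$. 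Thus the maximum over $\mu$ in Corollary~\ref{c:Mathias:r2} becomes vacuous, $\mu$ is forced to equal $z_0$, and the factor $(1-t)\mu\mathbf1+tA$ becomes $(1-t)z_0\mathbf1+tA$. Substituting these into the estimate of Corollary~\ref{c:Mathias:r2} reproduces precisely the claimed inequality.

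The only point genuinely requiring care is the identification of \emph{the} Padé approximant with the confluent rational interpolant: this identification is valid exactly when $v(z_0)\neq0$, that is, when the $[L/M]$ entry of the Padé table is nondegenerate, since only then does the linearized condition $(vf-u)(z)=O\bigl((z-z_0)^{N}\bigr)$ translate, via Proposition~\ref{p:ratio interp}, into the interpolation conditions $r^{(j)}(z_0)=f^{(j)}(z_0)$. Once this normalization is assumed, the argument is a direct specialization and no estimate beyond Corollary~\ref{c:Mathias:r2} is needed.
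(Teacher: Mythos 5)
Your proposal is correct and follows exactly the paper's own route: the paper likewise proves this corollary by observing that the Pad\'e approximant is the rational interpolant corresponding to a single node $z_0$ of multiplicity $N$ and then specializing Corollary~\ref{c:Mathias:r2}, under which $\Omega(A)=(A-z_0\mathbf 1)^N$ and the maximum over $\mu$ collapses to $\mu=z_0$. Your additional verification of the hypotheses (convexity of the disc, $L\le N-1$, $v(z_0)\neq0$ for a nondegenerate Pad\'e entry) is more explicit than the paper's one-line argument but does not change the approach.
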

\begin{proof}
It suffices to recall that the Pad\'e approximant is an inter\-po\-la\-tion rational function that corresponds to a single interpolation point $z_0$ of multiplicity~$N$.
\end{proof}
An analogue of Corollary~\ref{c:Mathias:r3} for approximation by the Taylor polynomials is established in~\cite{Mathias93a}.

\section{Reduced-order methods}\label{s:DS}
In this Section, we describe an application of Corollary~\ref{c:Mathias:r5} for accuracy of approximation of the impulse response of a single-input, single-output dynamical system~\cite{Antoulas} based on the Arnoldi type method of order reduction.

We consider a dynamical system~\cite{Antoulas,Polderman-Willems} with the input $u$ and the output $y$ governed by the equations
\begin{equation}\label{e:DS}
\begin{split}
x'(t)&=Ax(t)+bu(t),\\
y(t)&=d^Hx(t),
\end{split}
\end{equation}
where $A\in\mathbb C^{n\times n}$ and $b,d\in\mathbb C^n$ are given matrices.
The following fact is well known.
\begin{theorem}[{\rm\cite[p.~65]{Antoulas},~\cite[p.~46]{Zhou-Doyle-Glover96}}]\label{t:sol of DS}
The solution of problem~\eqref{e:DS} satisfying the initial condition
\begin{equation*}
x(t_0)=x_0
\end{equation*}
can be represented as
\begin{equation*}
y(t)=d^H\biggl(\exp_{t-t_0}(A)x_0 + \int_{t_0}^{t} \exp_{t-r}(A)bu(r)\,dr\biggr),\qquad t\ge t_0,
\end{equation*}
where $\exp_t(\lambda)=e^{\lambda t}$.
\end{theorem}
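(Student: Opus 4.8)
The plan is to first solve the state equation $x'(t)=Ax(t)+bu(t)$ with $x(t_0)=x_0$ and then apply the linear functional $d^H$ to the result. I would treat the state equation as a first-order linear inhomogeneous system and use the integrating factor $\exp_{-t}(A)$. The key observation is that $A$ commutes with every function of $A$, in particular with $\exp_{-t}(A)$, so that the product rule yields
\[
\frac{d}{dt}\bigl(\exp_{-t}(A)x(t)\bigr)
=\exp_{-t}(A)\bigl(x'(t)-Ax(t)\bigr)
=\exp_{-t}(A)bu(t).
\]
Integrating this identity from $t_0$ to $t$ and using $x(t_0)=x_0$ gives
\[
\exp_{-t}(A)x(t)=\exp_{-t_0}(A)x_0+\int_{t_0}^{t}\exp_{-r}(A)bu(r)\,dr.
\]

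Next I would multiply on the left by $\exp_{t}(A)$ and invoke the semigroup law $\exp_{s}(A)\exp_{\tau}(A)=\exp_{s+\tau}(A)$, which holds because both factors are functions of the same matrix $A$; this turns $\exp_{t}(A)\exp_{-t_0}(A)$ into $\exp_{t-t_0}(A)$ and $\exp_{t}(A)\exp_{-r}(A)$ into $\exp_{t-r}(A)$, so that
\[
x(t)=\exp_{t-t_0}(A)x_0+\int_{t_0}^{t}\exp_{t-r}(A)bu(r)\,dr.
\]
Substituting this into $y(t)=d^Hx(t)$ produces the asserted formula. The one point that requires care is moving the constant matrix $\exp_{t}(A)$ through the integral sign; this is justified by the continuity and boundedness of the matrix-valued integrand on the compact interval $[t_0,t]$ together with the linearity of the integral, and is the main technical obstacle in this route.

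As an alternative that bypasses the integrating factor, I could instead take the claimed expression for $x(t)$ as an ansatz and verify it directly. Differentiating the variable-limit integral by the Leibniz rule, using $\frac{d}{ds}\exp_{s}(A)=A\exp_{s}(A)$ and $\exp_{0}(A)=\mathbf1$, the boundary term contributes exactly $bu(t)$ and the remaining terms combine to $Ax(t)$, so that $x'(t)=Ax(t)+bu(t)$; moreover $x(t_0)=\exp_{0}(A)x_0=x_0$. Uniqueness of solutions of the linear Cauchy problem then completes the argument. In this second approach the delicate step is the Leibniz differentiation of the integral with a $t$-dependent integrand and a $t$-dependent upper limit, but it is routine once the smoothness of the integrand in $t$ is noted.
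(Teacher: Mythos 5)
The paper does not prove this statement at all: it is introduced with ``The following fact is well known'' and is justified only by citations to Antoulas and to Zhou--Doyle--Glover, so there is no internal proof to compare against. Your argument is the standard variation-of-constants derivation that those references use, and it is correct: the integrating-factor computation $\frac{d}{dt}\bigl(\exp_{-t}(A)x(t)\bigr)=\exp_{-t}(A)bu(t)$ is valid because $A$ commutes with $\exp_{-t}(A)$, the semigroup identity $\exp_s(A)\exp_\tau(A)=\exp_{s+\tau}(A)$ holds for the matrix exponential, and applying $d^H$ at the end is immediate. Your alternative route (verify the ansatz by the Leibniz rule and invoke uniqueness for the linear Cauchy problem) is equally fine. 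The only quibble is one of emphasis: pulling the constant matrix $\exp_t(A)$ through the integral is just linearity of the integral and is not a genuine technical obstacle, so neither of the steps you flag as delicate actually requires extra care.
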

This formula shows that the principal part of solving the problem~\eqref{e:DS} consists in finding the function $t\mapsto d^H\exp_t(A)b$. We call the function $t\mapsto d^H\exp_t(A)b$ the (\emph{scalar{\rm)} impulse response} and we call the function $t\mapsto \exp_t(A)b$ the (\emph{vector{\rm)} impulse response}.

A system of \emph{reduced order} with respect to~\eqref{e:DS} is~\cite{Antoulas,Grimme97,Vorst} the system governed by the equations
 \begin{equation}\label{e:dynamic system'}
 \begin{split}
\hat x'(t)&=\widehat A\hat x(t)+\hat bu(t),\\
\hat y(t)&=\hat d^H\hat x(t),
 \end{split}
 \end{equation}
in which the order $\hat n$ of the matrix $\widehat A$ is substantially less than the order $n$ of the matrix $A$, but the output $\hat y$ is close to the output $y$ of problem~\eqref{e:DS}.

We say that problem~\eqref{e:dynamic system'} is constructed by a \emph{projection} method if the coefficients $\widehat A,\hat b,\hat d$ in~\eqref{e:dynamic system'} are expressed in terms of the coefficients of initial problem~\eqref{e:DS} by the formulae
 \begin{equation}\label{e:hats}
\widehat A=\Lambda AV,\qquad
\hat b=\Lambda b,\qquad
\hat d=Vd,
 \end{equation}
where $V\in\mathbb C^{n\times\hat n}$ and $\Lambda\in\mathbb C^{\hat n\times n}$ are some matrices.

We will always assume that the following \emph{normalizing} assumption is fulfilled:
   \begin{equation}\label{e:Labmda V=1}
 \begin{split}
\Lambda V&=\mathbf1_{\hat n\times\hat n},
 \end{split}
 \end{equation}
where $\mathbf1_{\hat n\times\hat n}$ is the identity matrix of the size $\hat n\times\hat n$. Moreover, usually we will assume that condition~\eqref{e:Lambda=VH} from the following proposition is fulfilled.

 \begin{proposition}\label{p:S}
Let $S\in\mathbb C^{\hat n\times\hat n}$ be an arbitrary invertible matrix. We set $V_1=VS$,
$\Lambda_1=S^{-1}\Lambda$,
\begin{equation*}
\widehat A_1=\Lambda_1 A V_1, \qquad\hat b_1=\Lambda_1b,\qquad\hat d^H_1=d^HV_1.
\end{equation*}
Then the solution $\hat y$ of the problem
\begin{equation}\label{e:S-reduced}
\begin{split}
{\hat x}_1'&=\widehat{A}_1{\hat x}_1+\hat b_1u(t),\\
\hat y(t)&=\hat d^H_1{\hat x}(t)
 \end{split}
\end{equation}
coincides with the solution $\hat y$ of problem~\eqref{e:dynamic system'}.
\end{proposition}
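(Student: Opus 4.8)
The plan is to exhibit an explicit change of state variable that carries a solution of~\eqref{e:dynamic system'} to a solution of~\eqref{e:S-reduced} while leaving the output unchanged; the whole assertion then reduces to the elementary fact that a state-space similarity transformation does not alter the input--output map. Concretely, I would try the substitution $\hat x(t)=S\hat x_1(t)$ (equivalently $\hat x_1=S^{-1}\hat x$) and check that it intertwines the two state equations.

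First I would record the three algebraic identities forced by the definitions $V_1=VS$, $\Lambda_1=S^{-1}\Lambda$ (and $\hat d^H=d^HV$, so that $\hat d_1^H=d^HV_1$ is the $S$-transformed output covector):
\begin{gather*}
\widehat A_1=\Lambda_1AV_1=S^{-1}(\Lambda AV)S=S^{-1}\widehat AS,\\
\hat b_1=\Lambda_1b=S^{-1}\hat b,\qquad
\hat d_1^H=d^HV_1=\hat d^HS.
\end{gather*}
Thus $(\widehat A_1,\hat b_1,\hat d_1^H)$ is obtained from $(\widehat A,\hat b,\hat d^H)$ precisely by the state transformation $S$. (It is worth noting in passing that the normalizing condition~\eqref{e:Labmda V=1} survives, since $\Lambda_1V_1=S^{-1}\Lambda VS=\mathbf1$, although this is not needed for the claim.)

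Next I would substitute $\hat x=S\hat x_1$ into the first equation of~\eqref{e:dynamic system'}. Multiplying $\hat x'=\widehat A\hat x+\hat bu$ on the left by $S^{-1}$ and using $\hat x=S\hat x_1$ yields $\hat x_1'=S^{-1}\widehat AS\,\hat x_1+S^{-1}\hat b\,u=\widehat A_1\hat x_1+\hat b_1u$, which is exactly the state equation of~\eqref{e:S-reduced}. Taking the initial data to correspond, $\hat x_1(t_0)=S^{-1}\hat x(t_0)$ (in particular $0\mapsto0$ for the zero initial condition underlying the impulse response), the uniqueness theorem for linear systems of ordinary differential equations guarantees that $S^{-1}\hat x$ is \emph{the} solution of~\eqref{e:S-reduced}. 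Finally the output computes as $\hat d_1^H\hat x_1=\hat d^HS\,S^{-1}\hat x=\hat d^H\hat x=\hat y$, so the two outputs coincide.

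There is no genuinely hard step here; the only points needing care are organizational. I would make sure the identities above are assembled with the factors of $S$ and $S^{-1}$ in the correct order---this is precisely where the paired definitions $V_1=VS$ and $\Lambda_1=S^{-1}\Lambda$ do their work, conjugating $\widehat A$ on both sides while cancelling cleanly in the output---and that the initial data of the two reduced problems are chosen to correspond under $\hat x_1(t_0)=S^{-1}\hat x(t_0)$, so that the appeal to uniqueness is legitimate.
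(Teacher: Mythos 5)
Your proof is correct and follows essentially the same route as the paper's: the change of variable $\hat x=S\hat x_1$, the identities $\widehat A_1=S^{-1}\widehat AS$, $\hat b_1=S^{-1}\hat b$, $\hat d_1^H=\hat d^HS$, and the cancellation in the output. Your explicit remarks on matching initial data and uniqueness are a minor (and welcome) addition of rigor, but not a different argument.
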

 \begin{proof}
We make the change ${\hat x}=S{\hat x}_1$ in problem~\eqref{e:dynamic system'}:
 \begin{equation*}
 \begin{split}
S{\hat x}_1'(t)&=\Lambda AVS{\hat x}_1(t)+\hat bu(t),\\
\hat y(t)&=\hat d^HS{\hat x}_1(t).
 \end{split}
 \end{equation*}
We multiply the differential equation by $S^{-1}$ and use the equality $S^{-1}S=\mathbf1$:
 \begin{equation*}
 \begin{split}
{\hat x}_1'(t)&=S^{-1}\Lambda AVS{\hat x}_1(t)+S^{-1}\hat bu(t),\\
\hat y(t)&=\hat d^HS{\hat x}_1(t).
 \end{split}
 \end{equation*}
We rewrite these equations as
 \begin{equation*}
 \begin{split}
{\hat x}_1'(t)&=\Lambda_1 AV_1{\hat x}_1(t)+S^{-1}\Lambda bu(t),\\
\hat y(t)&=d^HVS{\hat x}_1(t),
 \end{split}
 \end{equation*}
We have arrived at system~\eqref{e:S-reduced}.
 \end{proof}

In connection with Proposition~\ref{p:S}, the columns of the matrix $V$ and the rows of the matrix $\Lambda$ are usually taken orthonormal. This leads to the fact that calculations by the formula $\widehat A=\Lambda AV$ result in minimal round-off errors.

\begin{proposition}\label{p:Lambda=VH}
Let the columns of the matrix $V$ be orthonormalized and the matrix $\Lambda$ be defined by the formula
\begin{equation}\label{e:Lambda=VH}
\Lambda=V^{H}.
\end{equation}
Then assumption~\eqref{e:Labmda V=1} is fulfilled, and the matrix $V\Lambda\in\mathbb C^{n\times n}$ defines an orthogonal projector $P$ onto the linear span of the columns of the matrix $V$.
\end{proposition}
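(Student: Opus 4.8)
The plan is to unpack the orthonormality hypothesis into the single matrix identity $V^H V=\mathbf1_{\hat n\times\hat n}$ and then derive both assertions from it. First I would denote the columns of $V$ by $v_1,\dots,v_{\hat n}\in\mathbb C^n$; the orthonormality assumption means precisely that $v_i^H v_j=\delta_{ij}$. Since the $(i,j)$ entry of $V^H V$ is exactly $v_i^H v_j$, this is the same as saying $V^H V=\mathbf1_{\hat n\times\hat n}$. Because $\Lambda=V^H$, this identity is assumption~\eqref{e:Labmda V=1}, so the first claim is immediate.

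For the second claim, set $P=V\Lambda=VV^H\in\mathbb C^{n\times n}$ and show $P$ is an orthogonal projector onto the column span of $V$. I would verify the two defining properties of an orthogonal projector: idempotency and self-adjointness. Idempotency follows by inserting the identity just established, namely $P^2=VV^HVV^H=V(V^HV)V^H=V\mathbf1 V^H=VV^H=P$. Self-adjointness is immediate, since $P^H=(VV^H)^H=VV^H=P$. A self-adjoint idempotent is by definition an orthogonal projector.

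It remains to identify the image of $P$ with the linear span of the columns of $V$, which I would argue by double inclusion. The inclusion $\operatorname{im}P\subseteq\operatorname{im}V$ is clear, because $Px=V(V^Hx)$ is a linear combination of the columns of $V$ for every $x$. For the reverse inclusion it suffices to check that each column is fixed by $P$; indeed $V^Hv_j=e_j$ (the $j$-th standard basis vector) by orthonormality, so $Pv_j=VV^Hv_j=Ve_j=v_j$. Hence every column of $V$ lies in $\operatorname{im}P$, giving $\operatorname{im}V\subseteq\operatorname{im}P$ and therefore equality. This completes the identification and the proof.

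None of the steps presents a genuine obstacle; the only point requiring mild care is to remember that an \emph{orthogonal} projector is characterized by the combination of idempotency and self-adjointness, and not by idempotency alone, so the verification $P^H=P$ must not be omitted.
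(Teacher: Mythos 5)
Your proof is correct; the first assertion is handled exactly as in the paper (the observation that $\Lambda V=V^HV$ is the Gram matrix of the orthonormal columns, hence the identity). For the second assertion you take a genuinely different, purely algebraic route: you verify the characterization $P^2=P$ and $P^H=P$ of an orthogonal projector and then identify the image of $P$ with the column span of $V$ by double inclusion, using $Pv_j=v_j$. The paper instead argues geometrically: it extends the $\hat n$ columns of $V$ to an orthonormal basis of $\mathbb C^n$, notes that $\Lambda x=V^Hx$ is the vector of the first $\hat n$ coordinates of $x$ in that basis, and concludes directly that $V(\Lambda x)$ is the orthogonal projection of $x$ onto the span of those columns. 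The two arguments buy slightly different things: the paper's version exhibits the projection explicitly in coordinates and needs no appeal to the idempotent-plus-self-adjoint characterization, while yours avoids the basis extension and isolates exactly where the hypothesis $V^HV=\mathbf1_{\hat n\times\hat n}$ is used (in the idempotency computation). Both are complete; your closing caution that self-adjointness must be checked in addition to idempotency is well placed.
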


\begin{proof}
By~\eqref{e:Lambda=VH}, the matrix $\Lambda V$ is the Gram matrix of the columns of the matrix $V$. This observation implies the first statement.

We extend the set consisting of $\hat n$ columns of the matrix $V$ to an orthonormal basis of $\mathbb C^n$. We take an arbitrary vector $x\in\mathbb C^n$. By~\eqref{e:Lambda=VH}, the vector $\Lambda x$ consists of the first $n$ coordinates of $x$ in this basis. Therefore, the vector $V(\Lambda x)\in\mathbb C^n$ coincides with the projection of $x$ onto the linear span of the first $\hat n$ basis vectors.
\end{proof}

\begin{corollary}\label{c:Lambda=VH}
Under assumptions of Proposition~\ref{p:Lambda=VH} $AV-V\widehat{A}=(\mathbf1-P)AV$.
\end{corollary}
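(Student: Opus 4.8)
The plan is to unwind the definitions from~\eqref{e:hats} and Proposition~\ref{p:Lambda=VH} and reduce everything to a single associativity computation. First I would record the two ingredients supplied by the hypotheses: under assumption~\eqref{e:Lambda=VH} we have $\Lambda=V^H$, so the reduced matrix is $\widehat A=\Lambda AV=V^HAV$; and, by Proposition~\ref{p:Lambda=VH}, the orthogonal projector onto the column span of $V$ is $P=V\Lambda=VV^H$.

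The key step is to rewrite the term $V\widehat A$. Substituting $\widehat A=V^HAV$ and using associativity of matrix multiplication gives
\begin{equation*}
V\widehat A=V\bigl(V^HAV\bigr)=\bigl(VV^H\bigr)AV=PAV.
\end{equation*}
This is the only nontrivial observation, and it is really just bracketing the product so that the factor $VV^H=P$ is exposed on the left; no genuine obstacle arises here.

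Finally I would subtract and factor:
\begin{equation*}
AV-V\widehat A=AV-PAV=(\mathbf1-P)AV,
\end{equation*}
which is exactly the claimed identity. The main (and essentially only) point to get right is the associative regrouping $V(V^HAV)=(VV^H)AV$ together with the identification $VV^H=P$ from Proposition~\ref{p:Lambda=VH}; once these are in place the conclusion follows immediately, so I expect the proof to be a short direct calculation rather than anything requiring a delicate argument.
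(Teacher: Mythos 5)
Your proposal is correct and matches the paper's own one-line proof, which likewise substitutes $\widehat A=\Lambda AV$ and regroups to recognize $V\Lambda AV=PAV$, giving $AV-V\widehat A=(\mathbf1-P)AV$. The only cosmetic difference is that you write $\Lambda=V^H$ explicitly, while the paper keeps the symbol $\Lambda$.
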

\begin{proof}
Indeed, $AV-V\widehat{A}=AV-V\Lambda AV=(\mathbf1-P)AV$.
\end{proof}

It is clear that the fundamental part in the construction of reduced-order model~\eqref{e:hats} is the choice of matrices $V$ and $\Lambda$. Proposition~\ref{p:Knizhnerman} below shows that the solution $\hat y$ of the reduced-order problem~\eqref{e:Labmda V=1} is determined by the linear span of the columns of the matrices $V$ and $\Lambda^H$.

\section{Two-sided rational Arnoldi}\label{s:Arnoldi-2}

We consider two variants of the Arnoldi method~\cite{Antoulas,Grimme97,Higham08,Lee-Chu-Feng06,Simoncini-Szyld07,Vorst,
Voevodin-Kuznetsov:rus-eng} 
of order reduction. We always assume that assumption~\eqref{e:Lambda=VH} is fulfilled.

Let $\varkappa_0$ and $\chi_{0}$ be given nonnegative integers called \emph{multiplicities}.
Let the image of the operator $V$ contains the vectors
\begin{equation}\label{e:Krylov vectors:V}
b,Ab,A^{2}b,\dots,A^{\varkappa_0-1}b,
\end{equation}
and the image of the operator $\Lambda^H$ contains the vectors
\begin{equation}\label{e:Krylov vectors:Lambda}
d,A^Hd,(A^H)^2d,\dots,(A^H)^{\chi_{0}-1}d.
\end{equation}
Further, let $\lambda_{1}, \lambda_{2}, \dots,\lambda_{m}\in\mathbb{C}$ be points not lying in the spectrum of $A$, and $\varkappa_{1}, \dots, \varkappa_{m}$ and $\chi_1,\dots,\chi_m$ be nonnegative integers. We additionally assume that
the image of the operator $V$ contains the vectors
\begin{equation}\label{e:more Krylov vectors:V}
 \begin{split}
(\lambda_{1}I-A)^{-1}b,\;&(\lambda_{1}I-A)^{-2}b,\;\dots,\;(\lambda_{1}I-A)^{-\varkappa_{1}}b,\\
\dots\dots\dots\dots\dots&\dots\dots\dots\dots\dots\dots\dots\dots\dots\dots\dots\\
(\lambda_{m}I-A)^{-1}b,\;&(\lambda_{m}I-A)^{-2}b,\;\dots,\;(\lambda_{m}I-A)^{-\varkappa_{m}}b
 \end{split}
\end{equation}
and the image of the operator $\Lambda^H$ contains the vectors
\begin{equation}\label{e:more Krylov vectors:Lambda}
 \begin{split}
(\bar\lambda_{1}I-A^H)^{-1}d,\;&(\bar\lambda_{1}I-A^H)^{-2}d,\;\dots,\;(\bar\lambda_{1}I-A^H)^{-\chi_{1}}d,\\
\dots\dots\dots\dots\dots&\dots\dots\dots\dots\dots\dots\dots\dots\dots\dots\dots\dots\\
(\bar\lambda_{m}I-A^H)^{-1}d,\;&(\bar\lambda_{m}I-A^H)^{-2}d,\;\dots,\;(\bar\lambda_{m}I-A^H)^{-\chi_{m}}d.
 \end{split}
\end{equation}
It is convenient to interpret vectors~\eqref{e:Krylov vectors:V} and~\eqref{e:Krylov vectors:Lambda} as analogues of vectors~\eqref{e:more Krylov vectors:V} and~\eqref{e:more Krylov vectors:Lambda} corresponding to the point $\lambda_{0}=\infty$.

In \emph{two-sided Arnoldi} methods, it is assumed that the image of $V=\Lambda^H$ is defined as the linear span of vectors~\eqref{e:Krylov vectors:V}--\eqref{e:more Krylov vectors:Lambda}. In \emph{one-sided Arnoldi} methods, it is assumed that the image of $V=\Lambda^H$ is defined as the linear span of vectors~\eqref{e:Krylov vectors:V} and~\eqref{e:more Krylov vectors:V} only. We assume that these vectors are linear independent. It is convenient to combine the verification of the linear independence with the orthonormal process. The columns of the matrix $V=\Lambda^H$ are usually taken orthonormal.

By Proposition~\ref{p:S}, reduced-order system~\eqref{e:dynamic system'} is defined by the points $\lambda_{0}=\infty$, $\lambda_{1} , \dots, \lambda_{m}\in\mathbb C$ and their multiplicities $\varkappa_{k}$ and $\chi_{k}$, $k=0,\dots,m$. The quality of approximation of system~\eqref{e:DS} by system~\eqref{e:dynamic system'} depends only of these parameters.

\begin{proposition}[{\rm\cite[Lemma 3.1]{Druskin-Knizhnerman-Zaslavsky09}, \cite[Lemma 3.1]{Guttel13}}]\label{p:Knizhnerman}

\noindent\begin{itemize}
 \item[{\rm(a)}]
Let $V\in\mathbb C^{n\times\hat n}$ and $\Lambda\in\mathbb C^{\hat n\times n}$ satisfy assumption~\eqref{e:Labmda V=1}.
Let the image of the matrix $V$ contain vectors~\eqref{e:Krylov vectors:V} and~\eqref{e:more Krylov vectors:V}, and the image of the matrix $\Lambda^H$ contain vectors~\eqref{e:Krylov vectors:Lambda} and~\eqref{e:more Krylov vectors:Lambda}.
We consider matrices~\eqref{e:hats}.
Let points $\lambda_1$, \dots, $\lambda_m\in\mathbb C$ be not both in the spectrum of $A$ and the spectrum of $\widehat{A}$.
Then for any rational function $r$ of the form
\begin{equation*}
r(\lambda)=\sum_{k=1}^{m}\sum_{j=1}^{\varkappa_k+\chi_k}\frac{g_{jk}}{{(\lambda_k-\lambda)^{j}}}+
\sum_{j=0}^{\varkappa_0+\chi_0-1}g_{j0}\lambda^j
\end{equation*}
one has
\begin{equation*}
d^Hr(A)\,b=\hat d^Hr(\widehat{A})\,\hat b.
\end{equation*}
 \item[{\rm(b)}]
Let $V\in\mathbb C^{n\times\hat n}$ satisfy assumption $V^HV=\mathbf1_{\hat n\times\hat n}$.
Let the image of the matrix $V$ contain vectors~\eqref{e:Krylov vectors:V} and~\eqref{e:more Krylov vectors:V}.
We consider matrices~\eqref{e:hats} with $\Lambda=V^H$.
Let points $\lambda_1$, \dots, $\lambda_m\in\mathbb C$ be not both in the spectrum of $A$ and the spectrum of $\widehat{A}$.
Then for any rational function $r$ of the form
 \begin{equation*}
r(\lambda)=\sum_{k=1}^{m}\sum_{j=1}^{\varkappa_k}\frac{g_{jk}}{{(\lambda_k-\lambda)^{j}}}+
\sum_{j=0}^{\varkappa_0-1}g_{j0}\lambda^j
\end{equation*}
one has
\begin{equation*}
r(A)b=Vr(\widehat{A})\hat b.
\end{equation*}
\end{itemize}
\end{proposition}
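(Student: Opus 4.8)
The plan is to reduce both statements to the behaviour of the projector $P=V\Lambda$. By the normalizing assumption~\eqref{e:Labmda V=1} one has $P^{2}=V(\Lambda V)\Lambda=V\Lambda=P$, so $P$ is an (oblique) projector with image $\operatorname{Im}V$; in particular $Pw=w$ whenever $w\in\operatorname{Im}V$. Dually, since $V^{H}\Lambda^{H}=(\Lambda V)^{H}=\mathbf1$, the matrix $P^{H}=\Lambda^{H}V^{H}$ is a projector with image $\operatorname{Im}\Lambda^{H}$, so $P^{H}w=w$ whenever $w\in\operatorname{Im}\Lambda^{H}$. Everything rests on the identity $\widehat A=\Lambda AV$ together with the fact that $P$, respectively $P^{H}$, acts as the identity on the prescribed Krylov vectors~\eqref{e:Krylov vectors:V}--\eqref{e:more Krylov vectors:Lambda}.

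First I would prove part~(b), which already contains the ``right-hand'' half of part~(a). For the polynomial block, an induction using $\widehat A\,\Lambda w=\Lambda A(V\Lambda w)=\Lambda A(Pw)=\Lambda Aw$ (valid for $w\in\operatorname{Im}V$) gives $\widehat A^{\,s}\hat b=\Lambda A^{s}b$ for $0\le s\le\varkappa_{0}$, because each intermediate vector $A^{s-1}b$ lies in $\operatorname{Im}V$ by~\eqref{e:Krylov vectors:V}; applying $V$ and using $Pw=w$ then yields $V\widehat A^{\,s}\hat b=A^{s}b$ for $0\le s\le\varkappa_{0}-1$. For the resolvent block I would verify the ansatz directly: writing $w_{s}=(\lambda_{k}\mathbf1-A)^{-s}b\in\operatorname{Im}V$ for $1\le s\le\varkappa_{k}$ (and $\hat b=\Lambda b=\Lambda w_{0}$), one computes $(\lambda_{k}\mathbf1-\widehat A)\Lambda w_{s}=\lambda_{k}\Lambda w_{s}-\Lambda A(Pw_{s})=\Lambda(\lambda_{k}\mathbf1-A)w_{s}=\Lambda w_{s-1}$, and since $\lambda_{k}\notin\sigma(\widehat A)$ the matrix $\lambda_{k}\mathbf1-\widehat A$ is invertible, so $(\lambda_{k}\mathbf1-\widehat A)^{-s}\hat b=\Lambda w_{s}$ and hence $V(\lambda_{k}\mathbf1-\widehat A)^{-s}\hat b=w_{s}$. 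Summing these relations against the coefficients $g_{j0}$ and $g_{jk}$ of $r$ proves part~(b).

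Next I would obtain the mirror ``left-hand'' identities by running the very same argument on the adjoint data $V'=\Lambda^{H}$, $\Lambda'=V^{H}$, $A'=A^{H}$, for which $\Lambda'V'=(\Lambda V)^{H}=\mathbf1$ and $\widehat{A'}=\Lambda'A'V'=\widehat A^{H}$, and whose prescribed Krylov vectors are exactly~\eqref{e:Krylov vectors:Lambda} and~\eqref{e:more Krylov vectors:Lambda}. Transposing the resulting relations back (and using $\bar\lambda_{k}\notin\sigma(\widehat A^{H})$, which is equivalent to $\lambda_{k}\notin\sigma(\widehat A)$) gives
\[
d^{H}V\widehat A^{\,t}=d^{H}A^{t}V\quad(0\le t\le\chi_{0}),\qquad
d^{H}V(\lambda_{k}\mathbf1-\widehat A)^{-t}=d^{H}(\lambda_{k}\mathbf1-A)^{-t}V\quad(0\le t\le\chi_{k}).
\]
Recalling that $\hat b=\Lambda b$ and $\hat d^{H}=d^{H}V$, so that the right-hand side of part~(a) is $d^{H}V\,r(\widehat A)\,\Lambda b$, it suffices by linearity to match each monomial $\lambda^{j}$, $0\le j\le\varkappa_{0}+\chi_{0}-1$, and each partial fraction $(\lambda_{k}-\lambda)^{-j}$, $1\le j\le\varkappa_{k}+\chi_{k}$. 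Splitting the exponent as $j=s+t$ and inserting one left and one right identity,
\[
d^{H}V\widehat A^{\,j}\Lambda b=(d^{H}A^{t}V)(\Lambda A^{s}b)=d^{H}A^{t}PA^{s}b,
\]
so the discrepancy with $d^{H}A^{j}b$ equals $d^{H}A^{t}(P-\mathbf1)A^{s}b$; the resolvent terms are handled identically with $A$ replaced by $(\lambda_{k}\mathbf1-A)^{-1}$ and $\widehat A$ by $(\lambda_{k}\mathbf1-\widehat A)^{-1}$.

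The crux, and the step I expect to demand the most care, is to choose the split so that this discrepancy vanishes. Writing $d^{H}A^{t}(\mathbf1-P)A^{s}b=\bigl[(\mathbf1-P^{H})(A^{H})^{t}d\bigr]^{H}A^{s}b$, the discrepancy is $0$ as soon as either $A^{s}b\in\operatorname{Im}V$ (killing the right factor, which holds for $s\le\varkappa_{0}-1$) or $(A^{H})^{t}d\in\operatorname{Im}\Lambda^{H}$ (killing the left factor, which holds for $t\le\chi_{0}-1$). Taking $t=\min(j,\chi_{0})$ and $s=j-t$ always satisfies the ranges $s\le\varkappa_{0}$, $t\le\chi_{0}$ of the one-sided identities and falls into one of these favourable cases, since $j\le\varkappa_{0}+\chi_{0}-1$ precludes having $s\ge\varkappa_{0}$ and $t\ge\chi_{0}$ at once; the identical bookkeeping with $\varkappa_{k},\chi_{k}$ (where now even $s=\varkappa_{k}$ is admissible because $(\lambda_{k}\mathbf1-A)^{-\varkappa_{k}}b\in\operatorname{Im}V$) disposes of the resolvent terms. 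Assembling the matched moments with the coefficients of $r$ then yields $d^{H}r(A)b=d^{H}V\,r(\widehat A)\,\Lambda b=\hat d^{H}r(\widehat A)\hat b$, which is part~(a).
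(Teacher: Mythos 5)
Your proof is correct. Note that the paper itself gives no proof of this proposition --- it is quoted with references to Druskin--Knizhnerman--Zaslavsky and G\"uttel --- so there is nothing internal to compare against; your projector-based argument (the one-sided ``exactness'' identities $\widehat A^{\,s}\hat b=\Lambda A^{s}b$, $(\lambda_k\mathbf1-\widehat A)^{-s}\hat b=\Lambda(\lambda_k\mathbf1-A)^{-s}b$ and their adjoints, combined via the split $j=s+t$ so that $\mathbf1-P$ is annihilated on one side) is essentially the standard proof found in those references. The only delicate points --- that the split $t=\min(j,\chi_0)$, $s=j-t$ always lands in a favourable case because $j\le\varkappa_0+\chi_0-1$, and that the case $\varkappa_0=0$ (where $b\notin\operatorname{Im}V$ is possible) is covered because then $t\le\chi_0-1$ --- are both handled correctly in your write-up.
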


 \begin{proposition}[{\rm\cite{Beckermann-Reichel09}}]\label{p:reduced order system}
Let the image of the matrix $V$ contain vectors~\eqref{e:Krylov vectors:V} and~\eqref{e:more Krylov vectors:V} and the image of the matrix $\Lambda^H$ contain vectors~\eqref{e:Krylov vectors:Lambda} and~\eqref{e:more Krylov vectors:Lambda}.
Let $V\in\mathbb C^{n\times\hat n}$ and $\Lambda\in\mathbb C^{\hat n\times n}$ satisfy assumption~\eqref{e:Labmda V=1}.
We consider matrices~\eqref{e:hats}.
Let $\sigma(\widehat A)$ consist of the points $\hat\mu_1,\dots,\hat\mu_{\hat m}\in\mathbb C$, and let $\hat w_1,\dots,\hat w_{\hat m}$ be their algebraic multiplicities.

Let the points $\lambda_1$, \dots, $\lambda_m\in\mathbb C$ be not both in the spectrum of $A$ and the spectrum of $\widehat{A}$.
Let a rational function\footnote{It may happen that the number of coefficients $\sum_{k=0}^m\varkappa_k+\chi_k$ in formula~\eqref{e:r_t} is less than the number $\sum_{k=1}^{\hat m}\hat w_k$ of interpolation conditions.} $r_t$ of the form
\begin{equation}\label{e:r_t}
r_t(\lambda)=\sum_{k=1}^{m}\sum_{j=1}^{\varkappa_k+\chi_k}\frac{g_{jk}(t)}{{(\lambda_k-\lambda)^{j}}}+
\sum_{j=0}^{\varkappa_0+\chi_0-1}g_{j0}(t)\lambda^j
\end{equation}
satisfy the following interpolation assumptions{\rm:} the function
$r_t$ coincides with the function $\exp_t(\lambda)=e^{\lambda t}$ at the points $\hat\mu_1,\dots,\hat\mu_{\hat m}$ with the derivatives up to the orders $\hat w_1-1,\dots,\hat w_{\hat m}-1${\rm:}
\begin{equation*}
r_t^{(j)}(\hat\mu_k)=\exp_t^{(j)}(\hat\mu_k),\qquad k=1,2,\dots,\hat m;\;j=0,1,\dots,w_{\hat m}-1.
\end{equation*}

Then one has
\begin{equation*}
d^Hr_t(A)b=\hat d^H\exp_t(\widehat{A})\hat b,\qquad t\in\mathbb R.
\end{equation*}
 \end{proposition}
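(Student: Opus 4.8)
The plan is to reduce the claim to the two ingredients already established in the excerpt: the moment-matching identity of Proposition~\ref{p:Knizhnerman}(a), which transfers a rational function evaluated at $A$ into the same rational function evaluated at $\widehat A$, and the spectral characterization of matrix functions of Theorem~\ref{t:f(A)=p(A)}, which lets me replace $r_t(\widehat A)$ by $\exp_t(\widehat A)$. The key observation is that $r_t$, being of the form~\eqref{e:r_t}, is precisely a rational function to which Proposition~\ref{p:Knizhnerman}(a) applies verbatim, while the interpolation hypothesis is exactly what is needed to identify $r_t(\widehat A)$ with $\exp_t(\widehat A)$.

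First I would dispose of the evaluation at $\widehat A$. Since each $\lambda_k$ lies outside $\sigma(\widehat A)$, the rational function $r_t$ is analytic in a neighborhood of $\sigma(\widehat A)$, so $r_t(\widehat A)$ is a legitimate matrix function. Let $p$ be the Hermite interpolating polynomial that matches the common data $r_t^{(j)}(\hat\mu_k)=\exp_t^{(j)}(\hat\mu_k)$ at the eigenvalues $\hat\mu_1,\dots,\hat\mu_{\hat m}$ up to the orders $\hat w_1-1,\dots,\hat w_{\hat m}-1$ dictated by the algebraic multiplicities of $\widehat A$. Applying Theorem~\ref{t:f(A)=p(A)} once to $f=\exp_t$ and once to $f=r_t$ gives $\exp_t(\widehat A)=p(\widehat A)=r_t(\widehat A)$; in particular $r_t(\widehat A)=\exp_t(\widehat A)$.

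Next I would invoke Proposition~\ref{p:Knizhnerman}(a) with the choice $r=r_t$. All of its hypotheses are in force: $V$ and $\Lambda$ satisfy~\eqref{e:Labmda V=1}, the images of $V$ and $\Lambda^H$ contain the prescribed Krylov vectors, the matrices $\widehat A,\hat b,\hat d$ are given by~\eqref{e:hats}, and the points $\lambda_k$ avoid both spectra. This yields the moment-matching identity
\begin{equation*}
d^H r_t(A)\,b=\hat d^H r_t(\widehat A)\,\hat b.
\end{equation*}
Substituting $r_t(\widehat A)=\exp_t(\widehat A)$ from the previous step produces $d^H r_t(A)\,b=\hat d^H\exp_t(\widehat A)\,\hat b$, which is the assertion; since $t$ was arbitrary this holds for all $t\in\mathbb R$.

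I expect no genuine obstacle, as the existence of $r_t$ is assumed rather than proved (the footnote warns precisely that the number of available parameters may fall short of the number of interpolation conditions, so existence cannot be taken for granted and is therefore placed among the hypotheses). The only point requiring care is the legitimacy of evaluating $r_t$ at $\widehat A$ together with the clean passage from the interpolation data to the operator identity $r_t(\widehat A)=\exp_t(\widehat A)$; this is handled by routing both $r_t$ and $\exp_t$ through a single interpolating polynomial and appealing to Theorem~\ref{t:f(A)=p(A)}, rather than attempting to compare two matrix functions directly.
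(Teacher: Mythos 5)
Your proposal is correct and follows essentially the same route as the paper: the paper's proof also combines Theorem~\ref{t:f(A)=p(A)} (to get $\exp_t(\widehat A)=r_t(\widehat A)$ from the interpolation conditions) with Proposition~\ref{p:Knizhnerman}(a) (to get $d^Hr_t(A)b=\hat d^Hr_t(\widehat A)\hat b$). Your extra care in routing both $\exp_t$ and $r_t$ through a common Hermite interpolating polynomial is a slightly more precise reading of Theorem~\ref{t:f(A)=p(A)}, which is stated for polynomials, but it is the same argument.
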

\begin{proof}
Since the function $r_t$ satisfies the interpolation conditions, by Theorem~\ref{t:f(A)=p(A)}, we have
$\exp_t(\widehat{A})=r_t(\widehat{A})$. Now from Proposition~\ref{p:Knizhnerman}(a) it follows that $d^Hr_t(A)b=\hat d^Hr_t(\widehat{A})\hat b$.
\end{proof}

\begin{theorem}\label{t:est Arnoldi2}
Let vectors~\eqref{e:Krylov vectors:V}, \eqref{e:Krylov vectors:Lambda}, \eqref{e:more Krylov vectors:V}, and~\eqref{e:more Krylov vectors:Lambda} form a basis\footnote{For example, if the matrix $A$ is Hermitian, and $b=d$ and $\varkappa_0=\chi_0$, then vectors~\eqref{e:Krylov vectors:V} coincide with vectors \eqref{e:Krylov vectors:Lambda} and the linear independence does not hold. Nevertheless, if vectors~\eqref{e:Krylov vectors:V}, \eqref{e:Krylov vectors:Lambda}, \eqref{e:more Krylov vectors:V}, and~\eqref{e:more Krylov vectors:Lambda} are calculated successively, one can easily exclude linear dependent vectors.} in the image of the matrix $V$, and let $V^HV=\mathbf1_{\hat n\times\hat n}$.
Consider matrices~\eqref{e:hats} with $\Lambda=V^H$.
Let $\sigma(\widehat A)$ consists of the points $\hat\mu_1,\dots,\hat\mu_{\hat m}\in\mathbb C$, and let $\hat w_1,\dots,\hat w_{\hat m}$ be their algebraic multiplicities.

Let the points $\lambda_1$, \dots, $\lambda_m\in\mathbb C$ be not both in $\sigma(A)$ and in $\sigma(\widehat{A})$.
Let the reduced-order system be defined by~\eqref{e:dynamic system'}.
Then the difference between scalar impulse respon\-ses of initial~\eqref{e:DS} and reduced-order~\eqref{e:dynamic system'} systems admits the estimate
\begin{multline*}
\bigl|d^H\exp_t(A)b-\hat d^H\exp_t(\widehat{A})\hat b\bigr|\le\\
\le
\max_{\substack{s\in[0,1]\\\mu\in\co\{\hat\mu_1,\dots,\hat\mu_{\hat m}\}}}\biggl|d^H\Bigl[
\Omega(A)[v(A)]^{-1}\frac{\bigl(v\exp_t\bigr)^{{(\hat n)}}
\bigl((1-s)\mu\mathbf1+sA\bigr)}{\hat n!}\Bigr]b\biggr|,
\end{multline*}
where
\begin{align*}
v(\lambda)&=\sum_{k=1}^{m}(\lambda-\lambda_k)^{\varkappa_k+\chi_k},\\
\Omega(\lambda)&=\sum_{k=1}^{\hat m}(\lambda-\hat\mu_k)^{\hat\omega_k}.
\end{align*}
\end{theorem}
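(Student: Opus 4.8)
The plan is to combine the general estimate of Corollary~\ref{c:Mathias:r5} with the exact identity for the reduced-order impulse response supplied by Proposition~\ref{p:reduced order system}. First I would verify that the hypotheses of Theorem~\ref{t:Mathias:r1} are met for the specific choice $f=\exp_t$, with the matrix $A$ of the original system, the interpolation nodes taken to be the eigenvalues $\hat\mu_1,\dots,\hat\mu_{\hat m}$ of $\widehat A$ counted with their algebraic multiplicities $\hat w_1,\dots,\hat w_{\hat m}$ (so that $N=\hat n$), and the rational function $r=r_t=u/v$ from~\eqref{e:r_t}. The denominator $v(\lambda)=\prod_{k=1}^m(\lambda-\lambda_k)^{\varkappa_k+\chi_k}$ is nonvanishing on $\sigma(A)$ precisely because the shift points $\lambda_1,\dots,\lambda_m$ were assumed not to lie in $\sigma(A)$, and $\Omega(\lambda)=\prod_{k=1}^{\hat m}(\lambda-\hat\mu_k)^{\hat w_k}$ is the characteristic polynomial of $\widehat A$. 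The numerator degree condition $L\le N-1$ holds since $r_t$ has at most $\sum_{k=0}^m(\varkappa_k+\chi_k)$ free coefficients and interpolates $\exp_t$ at $\hat n$ points.

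Second, the crucial reduction is the identity $\hat d^H\exp_t(\widehat A)\hat b=d^Hr_t(A)b$ from Proposition~\ref{p:reduced order system}. Substituting this into the left-hand side converts the quantity to be bounded into $\bigl|d^H\bigl(\exp_t(A)-r_t(A)\bigr)b\bigr|$, which is exactly the form to which Corollary~\ref{c:Mathias:r5} applies, with the vectors $b$ and $d$ playing their roles in the bilinear functional $A\mapsto d^H A b$. Applying that corollary with $f=\exp_t$ and the nodes and denominator just described yields immediately the asserted bound, the convex hull being $\co\{\hat\mu_1,\dots,\hat\mu_{\hat m}\}$ and the factorial normalization being $\hat n!$.

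The main obstacle — and the point requiring care rather than difficulty — is confirming that Proposition~\ref{p:reduced order system} genuinely applies under the present hypotheses. That proposition assumed only the normalizing condition~\eqref{e:Labmda V=1}, whereas here I have the stronger one-sided setup $\Lambda=V^H$ with $V^HV=\mathbf1$; I would note that~\eqref{e:Lambda=VH} implies~\eqref{e:Labmda V=1}, so the hypothesis is a fortiori satisfied. I must also check that the Krylov vectors~\eqref{e:Krylov vectors:V}--\eqref{e:more Krylov vectors:Lambda} forming a basis of the image of $V$ supply exactly the inclusions required by Proposition~\ref{p:reduced order system}, and that the interpolation data of $r_t$ (matching $\exp_t$ at the eigenvalues of $\widehat A$ to the orders $\hat w_k-1$) are precisely what makes $\exp_t(\widehat A)=r_t(\widehat A)$ via Theorem~\ref{t:f(A)=p(A)}.

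Once these bookkeeping verifications are in place, the proof is short: invoke Proposition~\ref{p:reduced order system} to rewrite the reduced-order output, then invoke Corollary~\ref{c:Mathias:r5}. I would write it as follows.

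\begin{proof}
By Proposition~\ref{p:reduced order system}, whose normalizing hypothesis~\eqref{e:Labmda V=1} follows from $\Lambda=V^H$ together with $V^HV=\mathbf1_{\hat n\times\hat n}$, we have
\begin{equation*}
\hat d^H\exp_t(\widehat A)\hat b=d^Hr_t(A)b,
\end{equation*}
where $r_t=u/v$ is the rational function~\eqref{e:r_t} interpolating $\exp_t$ at the eigenvalues $\hat\mu_1,\dots,\hat\mu_{\hat m}$ of $\widehat A$ with multiplicities $\hat w_1,\dots,\hat w_{\hat m}$. Hence
\begin{equation*}
d^H\exp_t(A)b-\hat d^H\exp_t(\widehat A)\hat b=d^H\bigl(\exp_t(A)-r_t(A)\bigr)b.
\end{equation*}
Taking the interpolation nodes to be $\hat\mu_1,\dots,\hat\mu_{\hat m}$ counted with multiplicities (so that $N=\hat n$ and $\Omega$ is the characteristic polynomial of $\widehat A$), we note that $v(\lambda)\neq0$ for $\lambda\in\sigma(A)$ since $\lambda_1,\dots,\lambda_m\notin\sigma(A)$, and that the remaining hypotheses of Theorem~\ref{t:Mathias:r1} hold for $f=\exp_t$. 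Applying Corollary~\ref{c:Mathias:r5} to the functional $X\mapsto d^HXb$ yields
\begin{multline*}
\bigl|d^H\bigl(\exp_t(A)-r_t(A)\bigr)b\bigr|\\
\le\max_{\substack{s\in[0,1]\\\mu\in\co\{\hat\mu_1,\dots,\hat\mu_{\hat m}\}}}\biggl|d^H\Bigl[\Omega(A)[v(A)]^{-1}\frac{\bigl(v\exp_t\bigr)^{{(\hat n)}}\bigl((1-s)\mu\mathbf1+sA\bigr)}{\hat n!}\Bigr]b\biggr|,
\end{multline*}
which is the required estimate.
\end{proof}
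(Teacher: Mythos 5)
Your proposal is correct and follows essentially the same route as the paper's own proof: rewrite the difference of impulse responses as $d^H\bigl(\exp_t(A)-r_t(A)\bigr)b$ via Proposition~\ref{p:reduced order system}, then apply Corollary~\ref{c:Mathias:r5} after checking that $L\le\hat n-1$ and that $v$ does not vanish on $\sigma(A)$ or at the nodes $\hat\mu_k$. The bookkeeping you flag (the normalizing condition, the degree count via the basis property, the nonvanishing of the denominator) is exactly what the paper verifies.
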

We note that under assumptions of Theorem~\ref{t:est Arnoldi2}
\begin{equation*}
\hat n=\sum_{k=0}^{m}\varkappa_k+\chi_k=\sum_{k=1}^{\hat m}\hat w_k.
\end{equation*}

\begin{proof}
By Proposition~\ref{p:reduced order system},
\begin{equation*}
d^H\exp_t(A)\,b-\hat d^H\exp_t(\widehat{A})\,\hat b
=d^H\bigl(\exp_t(A)-r_t(A)\bigr)\,b,
\end{equation*}
where $r_t$ is a function of the form~\eqref{e:r_t} that interpolates
the function $\exp_t(\lambda)=e^{\lambda t}$ at the points $\hat\mu_1,\dots,\hat\mu_{\hat m}$ with multiplicities $\hat w_1,\dots,\hat w_{\hat m}$.

It remains to apply Corollary~\ref{c:Mathias:r5}, see also Theorem~\ref{t:Mathias:r1}. The degree $L$ of the numerator of function~\eqref{e:r_t} is less than or equal to $-1+\sum_{k=0}^m\varkappa_k+\chi_k$. Since the vectors~\eqref{e:Krylov vectors:V}, \eqref{e:Krylov vectors:Lambda}, \eqref{e:more Krylov vectors:V}, and~\eqref{e:more Krylov vectors:Lambda} form a basis, the order $\hat n$ of the matrix $\widehat{A}$ (this order determines the number of interpolation conditions) equals $\sum_{k=0}^m\varkappa_k+\chi_k$. Therefore the assumption $L\le\hat n-1$ from Theorem~\ref{t:Mathias:r1} is fulfilled. Furthermore, the denominator  $v(\lambda)=\prod_{k=1}^{m}(\lambda-\lambda_k)^{\varkappa_k+\chi_k}$ of function~\eqref{e:r_t}, by assumptions of Theorem~\ref{t:est Arnoldi2}, does not vanish both at the points of interpolation $\hat\mu_k$ and on $\sigma(A)$. Thus, all assumptions of Theorem~\ref{t:Mathias:r1} are fulfilled.
\end{proof}

\section{One-sided rational Arnoldi}\label{s:Arnoldi-1}
Theorem~\ref{t:est Arnoldi1} below is an analogue of Theorem~\ref{t:est Arnoldi2} for the approximation $t\mapsto Ve^{\widehat{A}t}\hat b$ of the vector impulse response $t\mapsto e^{At}b$. It corresponds to the one-sided Arnoldi method that allows one to calculate approximately the whole vector $e^{At}b$.

 \begin{proposition}[{\rm\cite[Theorem 3.3]{Guttel13}}]\label{p:Arnoldi-r1}
Let the image of the matrix $V$ contain vectors~\eqref{e:Krylov vectors:V} and~\eqref{e:more Krylov vectors:V}, and let $V^HV=\mathbf1_{\hat n\times\hat n}$.
We consider matrices~\eqref{e:hats} with $\Lambda=V^H$.
Let $\sigma(\widehat A)$ consists of the points $\hat\mu_1,\dots,\hat\mu_{\hat m}\in\mathbb C$, and let $\hat w_1,\dots,\hat w_{\hat m}$ be their algebraic multiplicities.

Let the points $\lambda_1$, \dots, $\lambda_m\in\mathbb C$ be not both in $\sigma(A)$ and in $\sigma(\widehat{A})$. Let $f$ be an analytic function defined on a neighborhood of the union of $\sigma(A)$ and $\sigma(\widehat{A})$.

Let a rational function $r_t$ of the form
 \begin{equation}\label{e:r_t-r1}
r_t(\lambda)=\sum_{k=1}^{m}\sum_{j=1}^{\varkappa_k}\frac{g_{jk}(t)}{{(\lambda_k-\lambda)^{j}}}+
\sum_{j=0}^{\varkappa_0-1}g_{j0}(t)\lambda^j
\end{equation}
satisfy the following interpolation assumptions{\rm:} the function
$r_t$ coincides with the function $\exp_t$ at all points $\hat\mu_1,\dots,\hat\mu_{\hat m}$ of $\sigma(\widehat A)$ with the derivatives up to the order $\hat w_1-1,\dots,\hat w_{\hat m}-1${\rm:}
\begin{equation*}
r_t^{(j)}(\hat\mu_k)=\exp_t^{(j)}(\hat\mu_k),\qquad k=1,2,\dots,\hat m;\;j=0,1,\dots,\hat w_{\hat m}-1.
\end{equation*}

Then one has
\begin{equation*}
r_t(A)\,b=V\exp_t(\widehat{A})\,\hat b.
\end{equation*}
 \end{proposition}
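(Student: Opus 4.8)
The plan is to follow exactly the template of the proof of Proposition~\ref{p:reduced order system}, specialized to the one-sided situation. The two-sided proof combined Theorem~\ref{t:f(A)=p(A)} (to replace $\exp_t(\widehat A)$ by the interpolating rational function on the small matrix) with part~(a) of Proposition~\ref{p:Knizhnerman} (to transport the identity up to the full matrix). Here I would instead invoke part~(b) of Proposition~\ref{p:Knizhnerman}, because the present hypotheses are exactly those of the one-sided setting: only the image of $V$ is prescribed to contain vectors~\eqref{e:Krylov vectors:V} and~\eqref{e:more Krylov vectors:V}, we take $\Lambda=V^H$ with $V^HV=\mathbf1_{\hat n\times\hat n}$, and the rational function~\eqref{e:r_t-r1} has precisely the shape $\sum_{k}\sum_{j}g_{jk}(t)(\lambda_k-\lambda)^{-j}+\sum_j g_{j0}(t)\lambda^j$ required in Proposition~\ref{p:Knizhnerman}(b) (with upper summation limits $\varkappa_k$ and $\varkappa_0-1$, rather than $\varkappa_k+\chi_k$ and $\varkappa_0+\chi_0-1$).

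First I would verify the hypothesis of Theorem~\ref{t:f(A)=p(A)} for the pair $(\exp_t, r_t)$ applied to the small matrix $\widehat A$. The spectrum $\sigma(\widehat A)=\{\hat\mu_1,\dots,\hat\mu_{\hat m}\}$ has algebraic multiplicities $\hat w_1,\dots,\hat w_{\hat m}$, and the standing interpolation assumption gives $r_t^{(j)}(\hat\mu_k)=\exp_t^{(j)}(\hat\mu_k)$ for $j=0,\dots,\hat w_k-1$. Hence Theorem~\ref{t:f(A)=p(A)} yields
\begin{equation*}
\exp_t(\widehat A)=r_t(\widehat A).
\end{equation*}
Next, applying Proposition~\ref{p:Knizhnerman}(b) to the rational function $r_t$ of the form~\eqref{e:r_t-r1} gives
\begin{equation*}
r_t(A)\,b=V\,r_t(\widehat A)\,\hat b.
\end{equation*}
Combining the two displayed identities produces $r_t(A)\,b=V\exp_t(\widehat A)\,\hat b$, which is the claim.

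I do not expect a genuine obstacle: the argument is a two-line assembly of results proved earlier in the excerpt. The only points requiring care are bookkeeping rather than mathematics. One must confirm that the function~\eqref{e:r_t-r1} is \emph{literally} of the admissible form in Proposition~\ref{p:Knizhnerman}(b)—in particular that no $\chi_k$-terms are present, since the one-sided construction prescribes only the image of $V$ and not that of $\Lambda^H$. One must also note that the hypothesis ``$\lambda_1,\dots,\lambda_m$ not both in $\sigma(A)$ and $\sigma(\widehat A)$'' is exactly what Proposition~\ref{p:Knizhnerman}(b) needs for the poles $\lambda_k$ to be harmless, and that the analyticity of $f$ on a neighborhood of $\sigma(A)\cup\sigma(\widehat A)$ guarantees both $r_t(A)$ and $\exp_t(\widehat A)$ are well defined. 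Once these matchings are checked, the equality is immediate.
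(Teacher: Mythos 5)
Your proposal is correct and is essentially the paper's own argument: the paper's proof consists of the single remark that it is ``similar to that of Proposition~\ref{p:reduced order system}'', and you have carried out exactly the intended adaptation, replacing Proposition~\ref{p:Knizhnerman}(a) by Proposition~\ref{p:Knizhnerman}(b) and applying Theorem~\ref{t:f(A)=p(A)} to obtain $\exp_t(\widehat A)=r_t(\widehat A)$ before transporting the identity to the full matrix. The bookkeeping points you flag (the shape of~\eqref{e:r_t-r1} without $\chi_k$-terms, the pole condition, and analyticity) are precisely the details the paper leaves implicit.
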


\begin{proof}
The proof is similar to that of Proposition~\ref{p:reduced order system}.
\end{proof}

\begin{theorem}\label{t:est Arnoldi1}
Let vectors~\eqref{e:Krylov vectors:V} and \eqref{e:more Krylov vectors:V} form a basis in the image of the matrix $V$, and $V^HV=\mathbf1_{\hat n\times\hat n}$.
Consider matrices~\eqref{e:hats} with $\Lambda=V^H$.
Let $\sigma(\widehat A)$ consist of the points $\hat\mu_1,\dots,\hat\mu_{\hat m}\in\mathbb C$, and let $\hat w_1,\dots,\hat w_{\hat m}$ be their algebraic multiplicities.

Let the points $\lambda_1$, \dots, $\lambda_m\in\mathbb C$ be not both in $\sigma(A)$ and in $\sigma(\widehat{A})$. Let $f$ be an analytic function defined on a neighbourhood of the union of $\sigma(A)$ and $\sigma(\widehat{A})$.
Then the difference between vector impulse responses of initial~\eqref{e:DS} and reduced-order~\eqref{e:dynamic system'} systems admits the estimate
\begin{multline}\label{e:est:one-sided}
\bigl\lVert\exp_t(A)b-V\exp_t(\widehat{A})\,\hat b\bigr\rVert\\
\le
\max_{\substack{s\in[0,1]\\\mu\in\co\{\hat\mu_1,\dots,\hat\mu_{\hat m}\}}}
\biggl\lVert\Omega(A)[v(A)]^{-1}\frac{\bigl(v\exp_t\bigr)^{{(\hat n)}}
\bigl((1-s)\mu\mathbf1+sA\bigr)}{\hat n!}b\biggr\rVert,
\end{multline}
where
\begin{align*}
v(\lambda)&=\sum_{k=1}^{m}(\lambda-\lambda_k)^{\varkappa_k},\\
\Omega(\lambda)&=\sum_{k=1}^{\hat m}(\lambda-\hat\mu_k)^{\hat\omega_k}.
\end{align*}
\end{theorem}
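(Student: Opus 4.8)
The plan is to reduce the statement to Corollary~\ref{c:Mathias:r6} in exactly the way Theorem~\ref{t:est Arnoldi2} was reduced to Corollary~\ref{c:Mathias:r5}, the only substantive changes being that here the one-sided identity of Proposition~\ref{p:Arnoldi-r1} supplies the interpolation representation and that the Euclidean vector norm replaces the scalar bilinear form. So this proof should be a near-verbatim transcription of the two-sided argument.

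First I would invoke Proposition~\ref{p:Arnoldi-r1}. Under the hypotheses of the theorem (vectors~\eqref{e:Krylov vectors:V} and~\eqref{e:more Krylov vectors:V} form a basis of the image of $V$, $V^HV=\mathbf1_{\hat n\times\hat n}$, $\Lambda=V^H$, and the points $\lambda_1,\dots,\lambda_m$ lie outside both spectra), let $r_t$ be the rational function of the form~\eqref{e:r_t-r1} that interpolates $\exp_t$ at $\hat\mu_1,\dots,\hat\mu_{\hat m}$ with multiplicities $\hat w_1,\dots,\hat w_{\hat m}$. Proposition~\ref{p:Arnoldi-r1} then yields $V\exp_t(\widehat A)\hat b=r_t(A)b$, whence
\begin{equation*}
\exp_t(A)b-V\exp_t(\widehat A)\hat b=\bigl(\exp_t(A)-r_t(A)\bigr)b.
\end{equation*}

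Next I would verify that $r_t$ meets the hypotheses of Theorem~\ref{t:Mathias:r1} with $f=\exp_t$, interpolation nodes $\hat\mu_1,\dots,\hat\mu_{\hat m}$ of total multiplicity $N=\hat n$, and denominator $v(\lambda)=\prod_{k=1}^m(\lambda-\lambda_k)^{\varkappa_k}$. Since the listed vectors form a basis, $\hat n=\sum_{k=0}^m\varkappa_k$, and this is precisely the number of interpolation conditions imposed on $r_t$. A short degree count for~\eqref{e:r_t-r1} — the polynomial part contributes degree $\varkappa_0-1$ and, over the common denominator $v$ of degree $\sum_{k=1}^m\varkappa_k$, the partial-fraction part contributes numerator degree at most $\bigl(\sum_{k=1}^m\varkappa_k\bigr)-1$ — shows the numerator degree $L$ obeys $L\le\hat n-1$. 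By the theorem's assumption $v$ vanishes neither on $\sigma(A)$ nor at the nodes $\hat\mu_k$. Thus all hypotheses of Theorem~\ref{t:Mathias:r1} hold.

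Finally, applying Corollary~\ref{c:Mathias:r6} to $\bigl(\exp_t(A)-r_t(A)\bigr)b$ gives estimate~\eqref{e:est:one-sided} directly. The only point requiring care — as already in Theorem~\ref{t:est Arnoldi2} — is the bookkeeping that identifies $\hat n$ simultaneously as the dimension of the reduced model, the number of interpolation nodes counted with multiplicity, and the bound $N$ governing the numerator degree; once that is pinned down, everything else is a mechanical substitution of the one-sided ingredients (Proposition~\ref{p:Arnoldi-r1} and Corollary~\ref{c:Mathias:r6}) for their two-sided counterparts.
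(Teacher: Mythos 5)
Your proposal is correct and follows exactly the paper's route: the paper's own proof reduces the claim via Proposition~\ref{p:Arnoldi-r1} to $\exp_t(A)b-r_t(A)b$ and then applies Corollary~\ref{c:Mathias:r6}, referring to the proof of Theorem~\ref{t:est Arnoldi2} for the degree and denominator checks that you spell out explicitly. Your version is simply a more detailed transcription of the same argument.
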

We note that under assumptions of Theorem~\ref{t:est Arnoldi1}
\begin{equation*}
\hat n=\sum_{k=0}^{m}\varkappa_k=\sum_{k=1}^{\hat m}\hat w_k.
\end{equation*}

\begin{proof}
The proof is similar to that of Theorem~\ref{t:est Arnoldi2}.
By Proposition~\ref{p:Arnoldi-r1},
\begin{equation*}
\exp_t(A)b-V\exp_t(\widehat{A})\hat b=\exp_t(A)b-r_t(A)\,b.
\end{equation*}
It remains to apply Corollary~\ref{c:Mathias:r6}.
\end{proof}

\section{Numerical range}\label{s:Numerical range}
In this section we describe (Examples~\ref{ex:1} and~\ref{ex:2}) two cases when estimate~\eqref{e:est:one-sided} can be used effectively.

The \emph{numerical range} of a matrix $A\in\mathbb C^{n\times n}$ is \cite{Gustafson-Rao} the set
\begin{equation*}
w(A)=\{\,\langle Az,z\rangle:\,\lVert z\rVert_2=1\,\}.
\end{equation*}
It is known~\cite[p.~4]{Gustafson-Rao} that $w(A)$ is a closed convex subset of $\mathbb C$.
The numerical range $w(A)$ of a normal matrix $A$ coincides~\cite[p.~16]{Gustafson-Rao} with the convex hall of~$\sigma(A)$.

\begin{proposition}\label{p:properties of num range}
The numerical range $w(A)$ possesses the following properties{\rm:}
\begin{itemize}
 \item[{\rm(a)}] $w(A)$ is a compact set{\rm;}
 \item[{\rm(b)}] $w(A)$ is contained in the ball of radius $\lVert A\rVert_{2\to2}$ centered at zero{\rm;}
 \item[{\rm(c)}] $w(A)$ contains $\sigma(A)${\rm;}
 \item[{\rm(d)}] $w(\alpha A)=\alpha w(A)$, $\alpha\in\mathbb C$.
\end{itemize}
\end{proposition}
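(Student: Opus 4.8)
The plan is to verify each of the four properties directly from the definition $w(A)=\{\langle Az,z\rangle:\lVert z\rVert_2=1\}$, using only elementary properties of the inner product together with the compactness of the unit sphere $S=\{z\in\mathbb C^n:\lVert z\rVert_2=1\}$ in the finite-dimensional space $\mathbb C^n$. All four assertions are standard, and I expect none of them to require more than a one-line argument, so the ``main obstacle'' here is essentially a matter of bookkeeping rather than any real difficulty.

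For (a), I would observe that $S$ is closed and bounded, hence compact, and that the map $z\mapsto\langle Az,z\rangle$ is continuous on $\mathbb C^n$. Since $w(A)$ is the image of the compact set $S$ under a continuous map, it is compact. For (b), for every $z\in S$ the Cauchy--Schwarz inequality gives $|\langle Az,z\rangle|\le\lVert Az\rVert_2\,\lVert z\rVert_2\le\lVert A\rVert_{2\to2}$, so that every point of $w(A)$ lies in the closed ball of radius $\lVert A\rVert_{2\to2}$ centered at the origin.

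For (c), if $\lambda\in\sigma(A)$ then there is an eigenvector $z$ with $Az=\lambda z$, which after normalization may be taken to satisfy $\lVert z\rVert_2=1$; then $\langle Az,z\rangle=\lambda\langle z,z\rangle=\lambda$, whence $\lambda\in w(A)$. For (d), linearity of the inner product in its first argument yields $\langle(\alpha A)z,z\rangle=\alpha\langle Az,z\rangle$ for each $z\in S$, and passing to the image over all such $z$ gives $w(\alpha A)=\alpha\,w(A)$.

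The only point deserving a moment's attention is the convention chosen for the inner product (whether it is taken linear in the first or in the second slot). Since in (d) the scalar $\alpha$ factors out of $\langle(\alpha A)z,z\rangle$ under either convention, and (a)--(c) are insensitive to the choice, this subtlety causes no genuine difficulty in the argument.
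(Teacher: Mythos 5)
Your proof is correct, and the paper itself dismisses this proposition with the single word ``Evident,'' so your argument simply writes out the standard details the authors had in mind: compactness of the image of the unit sphere under a continuous map, Cauchy--Schwarz, evaluation at a normalized eigenvector, and linearity. Nothing further is needed.
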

\begin{proof}
Evident.
\end{proof}

\begin{proposition}\label{p:Kurgalin}
Let the columns of the matrix $V\in\mathbb C^{n\times\hat n}$ be orthonormalized and assump\-tion~\eqref{e:Lambda=VH} be fulfilled.
Then the numerical range $w(\widehat{A})$ and {\rm(}consequently{\rm)} the spectrum $\sigma(\widehat{A})$ of the matrix $\widehat{A}=\Lambda AV$ are contained in the numerical range $w(A)$ of the matrix $A$.
\end{proposition}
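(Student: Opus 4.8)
The plan is to prove the set inclusion $w(\widehat A)\subseteq w(A)$ directly from the definition of the numerical range, after which the spectral inclusion will be an immediate consequence. Recall that $\Lambda=V^H$ by assumption~\eqref{e:Lambda=VH}, so $\widehat A=V^HAV$, and that orthonormality of the columns of $V$ means precisely $V^HV=\mathbf1_{\hat n\times\hat n}$. These two facts are all that the argument will use.

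First I would take an arbitrary point of $w(\widehat A)$, that is, a number of the form $\langle\widehat A\hat z,\hat z\rangle$ with $\hat z\in\mathbb C^{\hat n}$ and $\lVert\hat z\rVert_2=1$. Writing the inner product as $\langle x,y\rangle=y^Hx$ and substituting $\widehat A=V^HAV$, I would move $V^H$ across the inner product to obtain
\begin{equation*}
\langle\widehat A\hat z,\hat z\rangle=\hat z^HV^HAV\hat z=(V\hat z)^HA(V\hat z)=\langle Az,z\rangle,\qquad z:=V\hat z.
\end{equation*}

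The decisive step is to verify that $z$ is again a unit vector, and this is exactly where orthonormality enters: $\lVert z\rVert_2^2=(V\hat z)^H(V\hat z)=\hat z^HV^HV\hat z=\hat z^H\hat z=\lVert\hat z\rVert_2^2=1$. Hence $\langle\widehat A\hat z,\hat z\rangle=\langle Az,z\rangle$ with $\lVert z\rVert_2=1$, so this number belongs to $w(A)$. Since $\hat z$ was arbitrary, this yields $w(\widehat A)\subseteq w(A)$.

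Finally, the inclusion $\sigma(\widehat A)\subseteq w(A)$ follows at once by combining $\sigma(\widehat A)\subseteq w(\widehat A)$ (Proposition~\ref{p:properties of num range}(c)) with the inclusion just established. I do not expect any genuine obstacle here; the only point requiring care is the norm-preserving identity $\lVert V\hat z\rVert_2=\lVert\hat z\rVert_2$, which would fail without the orthonormality hypothesis $V^HV=\mathbf1_{\hat n\times\hat n}$ and is therefore the hinge of the whole proof.
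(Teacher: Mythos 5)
Your argument is correct and is essentially the paper's own proof: both establish that $V$ preserves the Euclidean norm via $V^HV=\mathbf1_{\hat n\times\hat n}$ and then move $V^H=\Lambda$ across the inner product to write $\langle\widehat A\hat z,\hat z\rangle=\langle A(V\hat z),V\hat z\rangle\in w(A)$. The concluding spectral inclusion via Proposition~\ref{p:properties of num range}(c) matches the paper's parenthetical ``consequently'' as well.
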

\begin{proof}
First, we notice that under the assumptions of the proposition $\lVert V\varphi\rVert=\lVert\varphi\rVert$ for any $\varphi\in\mathbb C^n$. In fact, by~\eqref{e:Labmda V=1} and Proposition~\ref{p:Lambda=VH},
\begin{equation*}
\lVert V\varphi\rVert=\sqrt{\langle V\varphi,V\varphi\rangle}
=\sqrt{\langle\Lambda V\varphi,\varphi\rangle}=\sqrt{\langle\varphi,\varphi\rangle}
=\lVert\varphi\rVert.
\end{equation*}

Let $\varphi\in\mathbb C^n$ be an arbitrary vector such that $\lVert\varphi\rVert=1$. Then
\begin{equation*}
\langle\widehat{A}\varphi,\varphi\rangle=\langle\Lambda AV\varphi,\varphi\rangle
=\langle AV\varphi,V\varphi\rangle\in w(A),
\end{equation*}
because $\lVert V\varphi\rVert=1$.
\end{proof}

\begin{example}\label{ex:1}
Let a matrix $A$ be self-adjoint and its spectrum be contained in a segment $[a,b]$. Hence, by Propositions~\ref{p:properties of num range} and~\ref{p:Kurgalin},  $\sigma(\widehat{A})\subseteq w(\widehat{A})\subseteq w(A)\subseteq[a,b]$. We recall that since a function $f(A)$ of a self-adjoint matrix $A$ is normal, the norm $\lVert f(A)\rVert$ coincides with the maximum of $|f(\lambda)|$ on the spectrum of $A$. Therefore the right-hand side of~\eqref{e:est:one-sided} can be estimated by
\begin{multline*}
\max_{\substack{s\in[0,1]\\\lambda,\mu\in[a,b]}}
\biggl|\Omega(\lambda)[v(\lambda)]^{-1}\frac{\bigl(v\exp_t\bigr)^{{(\hat n)}}
\bigl((1-s)\mu+s\lambda\bigr)}{\hat n!}\biggr|\cdot\lVert b\rVert\\
=\max_{\lambda\in[a,b]}
\biggl|\Omega(\lambda)[v(\lambda)]^{-1}\frac{\bigl(v\exp_t\bigr)^{{(\hat n)}}
(\lambda)}{\hat n!}\biggr|\cdot\lVert b\rVert.
\end{multline*}
\end{example}

We set~\cite[p. 11]{Dahlquist58},~\cite[Theorem 10.5]{Hairer-Norsett-Wanner:eng:2nd}, \cite{Lozinskii:eng} (clearly, the matrix $A+A^H$ is self-adjoint)
\begin{equation*}
\mu(A)=\max\Bigl\{\lambda:\,\lambda\in\sigma\Bigl(\frac{A+A^H}2\Bigr)\Bigr\}.
\end{equation*}
The number $\mu(A)$ is called the \emph{logarithmic norm} of $A$.

\begin{proposition}\label{p:num range and mu}
For any matrix $A\in\mathbb C^{n\times n}$ one has
\begin{equation*}
\mu(A)=\max\{\,\Real\lambda:\,\lambda\in w(A)\,\}.
\end{equation*}
\end{proposition}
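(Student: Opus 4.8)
The plan is to reduce the statement to the spectral theory of the single self-adjoint matrix $B=\frac{A+A^H}{2}$, whose numerical range is an explicit interval. The bridge between the two sides of the claimed identity is the elementary observation that taking the real part of the quadratic form $\langle Az,z\rangle$ symmetrizes $A$.

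First I would fix an arbitrary unit vector $z\in\mathbb C^n$ and compute the real part of the corresponding point of $w(A)$. Using $\overline{\langle Az,z\rangle}=\langle z,Az\rangle$ together with the defining property of the adjoint (in the same convention already used in the proof of Proposition~\ref{p:Kurgalin}, namely $\langle Mx,y\rangle=\langle x,M^Hy\rangle$), one gets $\langle z,Az\rangle=\langle A^Hz,z\rangle$, and hence
\begin{equation*}
\Real\langle Az,z\rangle=\frac{\langle Az,z\rangle+\langle A^Hz,z\rangle}{2}=\Bigl\langle\frac{A+A^H}{2}\,z,z\Bigr\rangle=\langle Bz,z\rangle.
\end{equation*}
Since this holds for every unit vector $z$, the set of real parts of points of $w(A)$ coincides exactly with the numerical range $w(B)$ of the self-adjoint matrix $B$. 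Consequently $\max\{\Real\lambda:\lambda\in w(A)\}=\max_{\lVert z\rVert_2=1}\langle Bz,z\rangle$.

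Next I would invoke the fact, recalled just before Proposition~\ref{p:properties of num range}, that the numerical range of a normal (in particular self-adjoint) matrix equals the convex hull of its spectrum. Because $B$ is self-adjoint, $\sigma(B)\subset\mathbb R$ and $w(B)=\co\,\sigma(B)=[\lambda_{\min}(B),\lambda_{\max}(B)]$, so that $\max_{\lVert z\rVert_2=1}\langle Bz,z\rangle=\max\{\lambda:\lambda\in\sigma(B)\}$. Recognizing the right-hand side as the definition of $\mu(A)$ completes the chain of equalities and yields the proposition.

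There is no serious obstacle here; the only point deserving care is the bookkeeping of the inner-product and adjoint convention in the symmetrization identity, which must match the convention fixed in Section~\ref{s:Notation} so that $B=\frac{A+A^H}{2}$ (and not some transpose variant) appears. Everything else is the standard Rayleigh-quotient characterization of the largest eigenvalue of a self-adjoint matrix, already available through the normal-matrix description of the numerical range quoted in the text.
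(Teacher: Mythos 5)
Your proposal is correct and follows essentially the same route as the paper: both reduce $\Real\langle Az,z\rangle$ to the Rayleigh quotient $\bigl\langle\frac{A+A^H}{2}z,z\bigr\rangle$ of the self-adjoint part and then identify its maximum over unit vectors with the largest eigenvalue, i.e.\ with $\mu(A)$. The only cosmetic difference is that you justify the last step via the normal-matrix description of the numerical range, whereas the paper invokes the Rayleigh-quotient characterization directly.
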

\begin{proof}
Indeed,
\begin{multline*}
\max\{\,\Real\lambda:\,\lambda\in w(A)\,\}=\max\{\,\Real\langle Az,z\rangle:\,\lVert z\rVert_2=1\,\}=\\
=\max\biggl\{\,\Real\Bigl(\Bigl\langle\frac{A+A^H}2z,z\Bigr\rangle+\Bigl\langle\frac{A-A^H}2z,z\Bigr\rangle\Bigr):\,\lVert z\rVert_2=1\,\biggr\}=\\
=\max\biggl\{\,\Real\Bigl\langle\frac{A+A^H}2z,z\Bigr\rangle:\,\lVert z\rVert_2=1\,\biggr\}=\\
=\max\biggl\{\,\Bigl\langle\frac{A+A^H}2z,z\Bigr\rangle:\,\lVert z\rVert_2=1\,\biggr\}=\mu(A).\qed
\end{multline*}
\renewcommand\qed{}
\end{proof}

\begin{remark}\label{r:w(A)}
We recall~\cite[p.~137]{Gustafson-Rao} the algorithm for approximate calculation (more precisely, estimation from without) of the numerical range $w(A)$ of $A$. We denote by $q_{\max}(A)$ the largest eigenvalue of the (self-adjoint) matrix $\frac{A+A^H}2$, and we denote by $q_{\min}(A)$ the smallest eigenvalue of the matrix $\frac{A+A^H}2$.
We recall that $q_{\min}(A)$ and $q_{\max}(A)$ can be calculated by standard tools~\cite{Wolfram}.
By definition,
\begin{equation*}
\mu(A)=q_{\max}(A).
\end{equation*}

Hence, by Proposition~\ref{p:num range and mu},
\begin{equation*}
q_{\max}(A)=\max\{\,\Real\lambda:\,\lambda\in w(A)\,\}.
\end{equation*}
Therefore,
\begin{equation*}
w(A)\subseteq\{\,\lambda\in\mathbb C:\,\Real\lambda\le q_{\max}(A)\,\}.
\end{equation*}
Applying this inclusion to $-A$ we arrive at
\begin{equation*}
w(A)\subseteq\{\,\lambda\in\mathbb C:\,q_{\min}(A)\le\Real\lambda\le q_{\max}(A)\,\}.
\end{equation*}

Further, we take an arbitrary $\varphi\in\mathbb R$ and consider the matrix $A_\varphi=e^{-i\varphi}A$. By Proposition~\ref{p:properties of num range}(d),
\begin{equation*}
w(A)=e^{i\varphi}w(A_\varphi).
\end{equation*}
Therefore,
\begin{equation*}
w(A)\subseteq e^{i\varphi}\{\,\lambda\in\mathbb C:\,q_{\min}(A_\varphi)\le\Real\lambda\le q_{\max}(A_\varphi)\,\}.
\end{equation*}
Taking several $\varphi$, we construct the intersection of the corresponding strips that contain $w(A)$. In fact, already two angles, 0 and $-\pi/2$, give a rectangle that contains $w(A)$.
\end{remark}

The following theorem shows that an analytic function of a matrix can be effectively estimated via the values of the function on the numerical range.

\begin{theorem}[{see~\cite{Badea-Crouzeix-Delyon06,Beckermann-Crouzeix07,Crouzeix07} and references therein}]\label{t:Crouzeix}
Let a function $f$ be defined and analytic in a neighborhood of the numerical range $w(A)$ of a square matrix~$A$. Then
\begin{equation*}
\lVert f(A)\rVert\le C\,\max_{\lambda\in w(A)}|f(\lambda)|,
\end{equation*}
where $C=11.08$. If the neighborhood is an ellipse, then $C=3.16$. If the neighborhood is an disc, then $C=2$.
\end{theorem}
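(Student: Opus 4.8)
The plan is to treat this as a \emph{von Neumann-type inequality} and to build it from the one structural feature that singles out the numerical range: the control of the resolvent by the distance to $w(A)$. Throughout I would work from the Riesz--Dunford representation $f(A)=\frac1{2\pi i}\int_{\partial\Omega}f(\zeta)R_\zeta\,d\zeta$, where $\Omega$ is a bounded convex domain with smooth boundary whose closure lies in the domain of analyticity of $f$ and whose interior contains $w(A)$. A first reduction shrinks the problem: since $w(A)$ is compact and $f$ is analytic on a neighborhood of it, $f$ may be approximated uniformly on $\overline\Omega$ by polynomials (Runge's theorem, or merely Taylor sections when the neighborhood is a disc), so it is enough to prove the inequality for $f$ a polynomial, with the freedom to take $\Omega$ as close to $w(A)$ as one pleases.

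The decisive elementary ingredient, and the reason the numerical range is the right object, is the resolvent estimate
\begin{equation*}
\lVert R_\zeta\rVert_{2\to2}=\bigl\lVert(\zeta\mathbf1-A)^{-1}\bigr\rVert_{2\to2}\le\frac1{\operatorname{dist}(\zeta,w(A))},\qquad \zeta\notin w(A).
\end{equation*}
This follows in one line: for a unit vector $z$ one has $\lVert(\zeta\mathbf1-A)z\rVert_2\ge|\langle(\zeta\mathbf1-A)z,z\rangle|=|\zeta-\langle Az,z\rangle|\ge\operatorname{dist}(\zeta,w(A))$, because $\langle Az,z\rangle\in w(A)$; taking the infimum over $z$ and inverting gives the bound. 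Substituting this into the contour representation yields the scalar majorant
\begin{equation*}
\lVert f(A)\rVert_{2\to2}\le\frac1{2\pi}\int_{\partial\Omega}\frac{|f(\zeta)|}{\operatorname{dist}(\zeta,w(A))}\,|d\zeta|.
\end{equation*}

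The disc and ellipse cases I would then dispatch by explicit conformal normalization. When the neighborhood is a disc, an affine map sends it to the unit disc and the transformed matrix has numerical radius at most one, so the sharp constant $2$ is exactly the established numerical-radius (von Neumann-type) functional calculus bound; on a domain that is itself the disc the supremum over $w(A)$ coincides with the supremum over the disc, which is what makes the constant sharp. For an elliptical neighborhood I would conformally normalize the geometry and run the same resolvent majorant, the special shape of the ellipse producing the improved constant $3.16$ in place of the universal one. In both cases I would lean on the known \emph{sharp} scalar estimates rather than rederive them.

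The genuinely hard part is the general convex case with a constant \emph{independent of $A$ and of the shape of $w(A)$}. The naive majorant above is useless along $\partial w(A)$ itself, where the distance factor vanishes and the integral diverges like the reciprocal of the collar width as $\Omega\downarrow w(A)$; one is forced to keep $\Omega$ strictly larger than $w(A)$ and then pay for the enlargement, controlling $\max_{\partial\Omega}|f|$ against $\max_{w(A)}|f|$ by a subharmonicity/maximum-principle argument and optimizing the thickness. Done crudely this leaves a constant that still depends on the geometry, and the whole difficulty of the theorem is to organize the underlying Cauchy/Faber-transform estimates so that this dependence cancels and a universal numerical value ($C=11.08$) survives. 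I expect this delicate operator-valued singular-integral estimate to be the main obstacle, and I would treat it as the deep external input, following \cite{Crouzeix07,Beckermann-Crouzeix07}, rather than attempt to reproduce it here.
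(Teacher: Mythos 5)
The paper does not prove this theorem at all: it is imported verbatim from the cited literature (Crouzeix, Beckermann--Crouzeix, Badea--Crouzeix--Delyon), so there is no internal argument to compare yours against. Your proposal is consistent with that treatment, and its elementary preliminaries are correct: the resolvent bound $\lVert(\zeta\mathbf1-A)^{-1}\rVert_{2\to2}\le 1/\operatorname{dist}(\zeta,w(A))$ does follow from $\lVert(\zeta\mathbf1-A)z\rVert_2\ge|\zeta-\langle Az,z\rangle|$ for unit $z$ (plus finite-dimensional surjectivity), the Cauchy-integral majorant is then immediate, and the reduction to polynomials via Runge is legitimate because $w(A)$ is convex, hence has connected complement. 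You also correctly diagnose why this majorant cannot by itself yield a shape-independent constant --- the $1/\operatorname{dist}$ factor blows up as the contour collapses onto $w(A)$ --- and you explicitly outsource the universal constant $11.08$, the ellipse constant $3.16$, and the sharp disc constant $2$ (Okubo--Ando/Berger--Stampfli type numerical-radius calculus) to the references. That is exactly what the paper does, so there is no gap relative to the paper; just be clear that what you have written is a correct framing plus citations, not a proof of the hard content. One small caution: in the disc case your remark that ``the supremum over $w(A)$ coincides with the supremum over the disc'' holds only when $w(A)$ actually fills the disc; the clean statement of that result bounds $\lVert f(A)\rVert$ by $2\sup_{D}|f|$ over the containing disc $D$, which is also the honest reading of the (somewhat loosely phrased) theorem statement here.
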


\begin{example}\label{ex:2}
We give another example when the right-hand side of~\eqref{e:est:one-sided} can be estimated effectively. Let the numerical range $w(A)$ be contained in a closed convex subset $\Psi\subseteq\mathbb C$. As the simplest examples, one can take for $\Psi$ the ball of radius $\lVert A\rVert$ centered at zero. Or one can take for $\Psi$ (according to Remark~\ref{r:w(A)}) the rectangle $[q_{\min}(A),q_{\max}(A)]\times[iq_{\min}(-iA),iq_{\max}(-iA)]$. By Theorem~\ref{t:Crouzeix}, the right-hand side of~\eqref{e:est:one-sided} can be estimated by
\begin{multline*}
11.08\cdot\max_{\substack{s\in[0,1]\\\lambda,\mu\in\Psi}}
\biggl|\Omega(\lambda)[v(\lambda)]^{-1}\frac{\bigl(v\exp_t\bigr)^{{(\hat n)}}
\bigl((1-s)\mu+s\lambda\bigr)}{\hat n!}\biggr|\cdot\lVert b\rVert\\
=11.08\cdot\max_{\lambda\in\Psi}
\biggl|\Omega(\lambda)[v(\lambda)]^{-1}\frac{\bigl(v\exp_t\bigr)^{{(\hat n)}}
(\lambda)}{\hat n!}\biggr|\cdot\lVert b\rVert.
\end{multline*}
\end{example}

\section{Numerical experiment}\label{s:experiment}
In this section, we present a numerical experiment that shows the gap between the left-hand and right-hand sides of~\eqref{e:est:one-sided}. We carry out our numerical experiments using `Mathematica'~\cite{Wolfram}.

For $f$ we take the function $f(\lambda)=e^\lambda$, i.e. $f=\exp_t$ with $t=1$.
We consider matrices $A$ with spectrum lying in the rectangle $[-1,0]+[-i\pi,i\pi]$.
We use the Euclidian norm $\lVert\cdot\rVert_2$ for vectors from $\mathbb C^n$.

We points $\lambda_k$, $k=1,\dots,8$, are determined by the rectangle $[-1,0]+[-i\pi,i\pi]$ in the following way. We take 18 points $0$, $\pm i\pi/4$, $\pm i\pi/2$, $\pm i3\pi/4$, $\pm i\pi$, and $-1$, $-1\pm i\pi/4$, $-1\pm i\pi/2$, $-1\pm i3\pi/4$, $-1\pm i\pi$ on the boundary of this rectangle. On the left Fig.~\ref{f:cir}, these points are marked by medium black dots. Then we calculate (by formulae from~\cite{Baker-Graves-Morris96:eng}) a rational function $q$ of degree $[9/8]$ that interpolates the function $f(\lambda)=e^\lambda$ at these 18 points. We take the poles $\lambda_k$, $k=1,\dots,8$, of the function $q$ as the zeroes of the function $v$ from~\eqref{e:est:one-sided}; thus, implicitly, $\lambda_k$, $k=1,\dots,8$, are the poles of the function $r_t$ from~\eqref{e:r_t-r1}. On the left Fig.~\ref{f:cir}, these points are marked by the sign $\oplus$.

We put $N=1024$. We take complex numbers $\nu_i$, $i=1,\dots,N$, uniformly distributed in the rectangle $[-1,0]+[-i\pi,i\pi]$. We consider the diagonal matrix $D$ of the size $N\times N$ with the diagonal entries $\nu_i$. We create a matrix $S$, whose entries are random numbers uniformly distributed in $[-1,1]+[-i,i]$. Then, we consider the matrix $A=SDS^{-1}$. Clearly, $\sigma(A)$ consists of the numbers $\nu_i$. We interpret $A$ as a random matrix whose spectrum is contained in the rectangle $[-1,0]+[-i\pi,i\pi]$.
On the right Fig.~\ref{f:cir}, we show an example of the spectrum of such a matrix.

We calculate the exact matrix $e^A$ by the formula
\begin{equation*}
e^A=SES^{-1},
\end{equation*}
where $E$ is the diagonal matrix with the diagonal entries $e^{\nu_i}$.

We take a random vector $b\in\mathbb C^{1024}$ with $\lVert v\rVert_2=1$.
We construct the matrix $V\in\mathbb C^{1024\times9}$ with orthonormal columns whose image coincides with the linear span of the vectors
\begin{equation*}
b,\;(\lambda_{1}I-A)^{-1}b,\;\dots,\;(\lambda_{8}I-A)^{-1}b.
\end{equation*}
We put $\Lambda=V^H$, consider $\widehat{A}=V^HAV\in\mathbb C^{n\times n}$, and calculate (by a standard tool) the spectrum $\sigma(\widehat{A})=\{\hat\mu_1,\dots,\hat\mu_{9}\}$ of the matrix $\widehat{A}$. On the right Fig.~\ref{f:cir}, the points $\hat\mu_k$ are marked by large black dots.

Then we calculate $e^{\widehat{A}}$ (again by a standard tool). Next we calculate the left-hand size of~\eqref{e:est:one-sided} (and denote it by $e_0$):
\begin{equation*}
e_0=\bigl\lVert f(A)b-Vf(\widehat{A})\,\hat b\bigr\rVert
=\bigl\lVert e^Ab-Ve^{\widehat{A}}\,\hat b\bigr\rVert_2.
\end{equation*}

We draw the boundary of the convex hall of $\sigma(\widehat{A})$; it is a broken line shown in the right Fig.~\ref{f:cir}. According to the Maximum modulus principle for analytic functions, we replace the maximum over $\mu\in\co\{\hat\mu_1,\dots,\hat\mu_{9}\}$ by the maximum over the boundary.

We calculate $\Omega(A)[v(A)]^{-1}\frac{(vf)^{{(9)}}
((1-s)\mu\mathbf1+sA)}{9!}b$ by the rule
\begin{equation*}
\Omega(A)[v(A)]^{-1}\frac{\bigl(vf\bigr)^{{(9)}}
\bigl((1-s)\mu\mathbf1+sA\bigr)}{9!}b=SHS^{-1}b,
\end{equation*}
where $H$ is a diagonal matrix with the diagonal entries
\begin{equation*}
h_{i}=\Omega(\nu_i)[v(\nu_i)]^{-1}\frac{\bigl(vf\bigr)^{{(9)}}
\bigl((1-s)\mu\mathbf1+s\nu_i\bigr)}{9!}.
\end{equation*}
After that, we calculate
\begin{equation*}
\biggl\lVert\Omega(A)[v(A)]^{-1}\frac{\bigl(vf\bigr)^{{(9)}}
\bigl((1-s)\mu\mathbf1+sA\bigr)}{9!}b\biggr\rVert_{2\to2}
\end{equation*}
for a discrete family of $\mu$'s and $s$'s. More precisely, we mark approximately 50 uniformly distributed points on the boundary; we denote them by $\mu_k$ (they are marked at the right-hand side of~\eqref{e:est:one-sided} by small black stars). Next, we take 11 points $s_l=l/10$, $l=0,\dots,10$, in the segment $[0,1]$. We take for $\mu$ only the points $\mu_k$, and we take for $s$ only the points $s_l$. Finally, we take the maximum over all the points. Thus, we obtain the right-hand side of~\eqref{e:est:one-sided}. We denote it by~$e_1$.

We repeated the described experiment 100 times. After each repetition, we saved 3 numbers: the value $e_0$ of the left-hand size of~\eqref{e:est:one-sided}, the value $e_1$ of the right-hand size, and the ratio $e_1/e_0$. Then we calculated the average values. They are as follows:
the mean value of  $e_0$ is $8.2\cdot10^{-7}$ with the standard deviation $8.1\cdot10^{-7}$,
the mean value of  $e_1$ is $2.9\cdot10^{-6}$ with the standard deviation $1.1\cdot10^{-5}$,
the mean value of  $e_1/e_0$ is $1.8$ with the standard deviation $2.1$.

The mean value $1.8$ of $e_1/e_0$ shows that the estimate is rather close to the real accuracy. 
\begin{figure}[htb]
\begin{center}
\includegraphics[width=\textwidth]{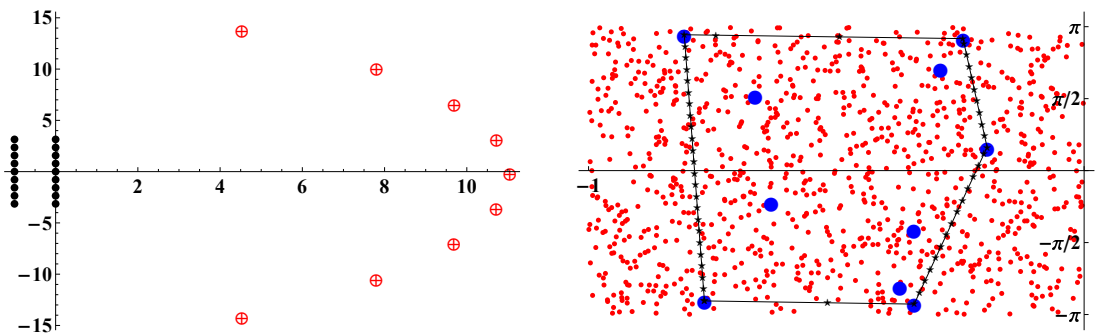}
\caption{Left: the poles $\lambda_k$, $k=1,\dots,9$; right: the spectra of $A$ and $\widehat{A}$}\label{f:cir}
\end{center}
\end{figure}

\section*{Acknowledgements}\label{s:Acknowledgements}
The reported study was funded by Russian Foundation for Basic Research and Czech Science Foundation according to the research projects No.~19-52-26006 and No.~20-10591J.
We also acknowledge the support by ERDF/ESF ``Centre of Advanced Applied Sciences'' (No. CZ.02.1.01/0.0/0.0/16\textunderscore019/0000778).

\providecommand{\bysame}{\leavevmode\hbox to3em{\hrulefill}\thinspace}
\providecommand{\MR}{\relax\ifhmode\unskip\space\fi MR }
\providecommand{\MRhref}[2]{%
  \href{http://www.ams.org/mathscinet-getitem?mr=#1}{#2}
}
\providecommand{\href}[2]{#2}


\begin{thebibliography}{10}

\bibitem{Antoulas}
A.~C. Antoulas, \emph{Approximation of large-scale dynamical systems}, Advances
  in Design and Control, vol.~6, Society for Industrial and Applied Mathematics
  (SIAM), Philadelphia, PA, 2005. \MR{2155615}

\bibitem{Badea-Crouzeix-Delyon06}
C.~Badea, M.~Crouzeix, and B.~Delyon, \emph{Convex domains and {$K$}-spectral
  sets}, Math. Z. \textbf{252} (2006), no.~2, 345--365. \MR{2207801}

\bibitem{Bai-Demmel98}
Zh. Bai and J.~Demmel, \emph{Using the matrix sign function to compute
  invariant subspaces}, SIAM J. Matrix Anal. Appl. \textbf{19} (1998), no.~1,
  205--225. \MR{1609964}

\bibitem{Baker-Graves-Morris96:eng}
G.~A. Baker~Jr. and P.~Graves-Morris, \emph{Pad\'e approximants}, second ed.,
  Encyclopedia of Mathematics and its Applications, vol.~59, Cambridge
  University Press, Cambridge, 1996. \MR{1383091}

\bibitem{Beckermann-Crouzeix07}
B.~Beckermann and M.~Crouzeix, \emph{Operators with numerical range in a conic
  domain}, Arch. Math. (Basel) \textbf{88} (2007), no.~6, 547--559.
  \MR{2325887}

\bibitem{Beckermann-Reichel09}
B.~Beckermann and L.~Reichel, \emph{Error estimates and evaluation of matrix
  functions via the {F}aber transform}, SIAM J. Numer. Anal. \textbf{47}
  (2009), no.~5, 3849--3883. \MR{2576523}

\bibitem{Burrage-Hale-Kay12}
K.~Burrage, N.~Hale, and D.~Kay, \emph{An efficient implicit {FEM} scheme for
  fractional-in-space reaction-diffusion equations}, SIAM J. Sci. Comput.
  \textbf{34} (2012), no.~4, A2145--A2172. \MR{2970400}

\bibitem{Byers-He-Mehrmann}
R.~Byers, Ch. He, and V.~Mehrmann, \emph{The matrix sign function method and
  the computation of invariant subspaces}, SIAM J. Matrix Anal. Appl.
  \textbf{18} (1997), no.~3, 615--632.

\bibitem{Cardoso-Sadeghi18}
J.~R. Cardoso and A.~Sadeghi, \emph{Computation of matrix gamma function}, BIT
  \textbf{59} (2019), no.~2, 343--370. \MR{3974043}

\bibitem{Crouzeix07}
M.~Crouzeix, \emph{Numerical range and functional calculus in {H}ilbert space},
  J. Funct. Anal. \textbf{244} (2007), no.~2, 668--690. \MR{2297040}

\bibitem{Dahlquist58}
G.~Dahlquist, \emph{Stability and error bounds in the numerical integration of
  ordinary differential equations}, Inaugural dissertation, University of
  Stockholm, Almqvist \& Wiksells Boktryckeri AB, Uppsala, 1958. \MR{0100966}

\bibitem{de_Boor05}
C.~de~Boor, \emph{Divided differences}, Surv. Approx. Theory \textbf{1} (2005),
  46--69.

\bibitem{Druskin-Knizhnerman-Zaslavsky09}
V.~Druskin, L.~Knizhnerman, and M.~Zaslavsky, \emph{Solution of large scale
  evolutionary problems using rational {K}rylov subspaces with optimized
  shifts}, SIAM J. Sci. Comput. \textbf{31} (2009), no.~5, 3760--3780.
  \MR{2556561}

\bibitem{Gelfond:eng1}
A.~O. Gel$'$fond, \emph{Calculus of finite differences}, second ed., GIFML,
  Moscow, 1959, (in Russian); translated by Hindustan Publishing Corp., Delhi,
  in series International Monographs on Advanced Mathematics and Physics, 1971.
  \MR{0342890}

\bibitem{Gelfond:eng}
\bysame, \emph{Calculus of finite differences}, International Monographs on
  Advanced Mathematics and Physics, Hindustan Publishing Corp., Delhi, 1971,
  Translation of the third Russian edition. \MR{0342890}

\bibitem{Grimm-Hochbruck08}
V.~Grimm and M.~Hochbruck, \emph{Rational approximation to trigonometric
  operators}, BIT \textbf{48} (2008), no.~2, 215--229. \MR{2430617}

\bibitem{Grimme97}
E.~J. Grimme, \emph{Krylov projection methods for model reduction}, Ph.D.
  thesis, University of Illinois at Urbana-Champaign, Urbana, Illinois, 1997.

\bibitem{Gustafson-Rao}
K.~E. Gustafson and D.~K.~M. Rao, \emph{Numerical range: the field of values of
  linear operators and matrices}, Universitext, Springer-Verlag, New York,
  1997. \MR{1417493}

\bibitem{Guttel13}
S.~G\"uttel, \emph{Rational {K}rylov approximation of matrix functions:
  numerical methods and optimal pole selection}, GAMM-Mitt. \textbf{36} (2013),
  no.~1, 8--31. \MR{3095912}

\bibitem{Hairer-Norsett-Wanner:eng:2nd}
E.~Hairer, S.~P. N{\o}rsett, and G.~Wanner, \emph{Solving ordinary differential
  equations. {I}. {N}onstiff problems}, second ed., Springer Series in
  Computational Mathematics, vol.~8, Springer-Verlag, Berlin, 1993.
  \MR{1227985}

\bibitem{Hartman73:eng}
Ph. Hartman, \emph{Ordinary differential equations}, S. M. Hartman, Baltimore,
  Md., 1973, Corrected reprint. \MR{0344555}

\bibitem{Higham08}
N.~J. Higham, \emph{Functions of matrices: theory and computation}, Society for
  Industrial and Applied Mathematics (SIAM), Philadelphia, PA, 2008.
  \MR{2396439}

\bibitem{Hille-Phillips:eng}
E.~Hille and R.~S. Phillips, \emph{Functional analysis and semi-groups},
  American Mathematical Society Colloquium Publications, vol.~31, Amer. Math.
  Soc., Providence, RI, 1957. \MR{0089373}

\bibitem{Jarlebring07}
E.~Jarlebring and T.~Damm, \emph{The {Lambert $W$} function and the spectrum of
  some multidimensional time-delay systems}, Automatica \textbf{43} (2007),
  no.~12, 2124--2128. \MR{2571740}

\bibitem{Jordan}
Ch. Jordan, \emph{Calculus of finite differences}, third ed., Chelsea
  Publishing Co., New~York, 1965. \MR{0183987}

\bibitem{Kenney-Laub95}
C.~S. Kenney and A.~J. Laub, \emph{The matrix sign function}, IEEE Trans.
  Automat. Control \textbf{40} (1995), no.~8, 1330--1348. \MR{1343800}

\bibitem{Kurbatov-Kurbatova-EMJ20}
V.~G. Kurbatov and I.~V. Kurbatova, \emph{An estimate of approximation of a
  matrix-valued function by an interpolation polynomial}, Eurasian Math. J.
  \textbf{11} (2020), no.~1, 86--94. \MR{4157279}

\bibitem{Lee-Chu-Feng06}
Herng-Jer Lee, Chia-Chi Chu, and Wu-Shiung Feng, \emph{An adaptive-order
  rational {A}rnoldi method for model-order reductions of linear time-invariant
  systems}, Linear Algebra Appl. \textbf{415} (2006), no.~2-3, 235--261.
  \MR{2227774}

\bibitem{Lozinskii:eng}
S.~M. Lozinski\u\i, \emph{Error estimate for numerical integration of ordinary
  differential equations. {I}}, Izv. Vys\v s. U\v cebn. Zaved. Matematika
  (1958), no.~5, 52--90, (in Russian). \MR{0145662}

\bibitem{Mathias93a}
R.~Mathias, \emph{Approximation of matrix-valued functions}, SIAM J. Matrix
  Anal. Appl. \textbf{14} (1993), no.~4, 1061--1063. \MR{1238920}

\bibitem{Polderman-Willems}
J.~W. Polderman and J.~C. Willems, \emph{Introduction to mathematical systems
  theory. {A} behavioral approach}, Texts in Applied Mathematics, vol.~26,
  Springer-Verlag, New York, 1998. \MR{1480665}

\bibitem{Schmelzer-Trefethen07}
Th. Schmelzer and L.~N. Trefethen, \emph{Computing the gamma function using
  contour integrals and rational approximations}, SIAM J. Numer. Anal.
  \textbf{45} (2007), no.~2, 558--571. \MR{2300287}

\bibitem{Simoncini-Szyld07}
V.~Simoncini and D.~B. Szyld, \emph{Recent computational developments in
  {K}rylov subspace methods for linear systems}, Numer. Linear Algebra Appl.
  \textbf{14} (2007), no.~1, 1--59. \MR{2289520}

\bibitem{Vorst}
H.~A. van~der Vorst, \emph{Iterative {K}rylov methods for large linear
  systems}, Cambridge Monographs on Applied and Computational Mathematics,
  vol.~13, Cambridge University Press, Cambridge, 2003. \MR{1990752}

\bibitem{Voevodin-Kuznetsov:rus-eng}
V.~V. Voevodin and Yu.~A. Kuznetsov, \emph{{M}atritsy i vychisleniya
  [{M}atrices and computations]}, ``Nauka'', Moscow, 1984, (in Russian).
  \MR{758446}

\bibitem{Walsh:eng}
J.~L. Walsh, \emph{Interpolation and approximation by rational functions in the
  complex domain}, third ed., American Mathematical Society Colloquium
  Publications, vol.~XX, American Mathematical Society, Providence, R.I., 1960.
  \MR{0218587}

\bibitem{Wolfram}
S.~Wolfram, \emph{The {M}athematica book}, fifth ed., Wolfram Media, New York,
  2003.

\bibitem{Zhou-Doyle-Glover96}
K.~Zhou, J.~C. Doyle, and K.~Glover, \emph{Robust and optimal control},
  vol.~40, Prentice-Hall, New Jersey, 1996.

\end{thebibliography}
\end{document}